\theoremstyle{plain}
\newtheorem{thm}{Theorem}[section]
\newtheorem{prop}[thm]{Proposition}
\newtheorem{lem}[thm]{Lemma}
\newtheorem{cor}[thm]{Corollary}
\newtheorem{conj}{Conjecture}
\theoremstyle{definition}
\newtheorem{defn}{Definition}
\theoremstyle{remark}
\newtheorem{remark}{Remark}
\newtheorem{example}{Example}
     \def\F{{\mathbb{F}}}        \def\N{{\mathbb{N}}}   \def\Q{{\mathbb{Q}}}         \def\Z{{\mathbb{Z}}}
  \def\beta{{\bar{\eta}}}
      \def\cG{{\mathcal{G}}} \def\cH{{\mathcal{H}}}   \def\cK{{\mathcal{K}}}   \def\cN{{\mathcal{N}}}  \def\cP{{\mathcal{P}}}  \def\cR{{\mathcal{R}}} \def\cS{{\mathcal{S}}} \def\cT{{\mathcal{T}}} \def\cU{{\mathcal{U}}}     
\newcommand\Aut{\operatorname{Aut}}
\newcommand\Inn{\operatorname{Inn}}
\newcommand\Ker{\operatorname{Ker}}
\newcommand\Out{\operatorname{Out}}
\newcommand\Sym{\operatorname{Sym}}
\def\cc{{\curvearrowright}}
\newcommand{\so}{\mathfrak{s}}
\begin{document}
\title{Simple and large equivalence relations}
\author{Lewis Bowen\footnote{supported in part by NSF grant DMS-1500389, NSF CAREER Award DMS-0954606}}
\maketitle

\begin{abstract}
We construct  ergodic discrete probability measure preserving equivalence relations $\cR$ that has no proper ergodic normal subequivalence relations and no proper ergodic finite-index subequivalence relations. We show that every treeable equivalence relation satisfying a mild ergodicity condition and cost $>1$ surjects onto every countable group with ergodic kernel. Lastly, we provide a simple characterization of normality for subequivalence relations and an algebraic description of the quotient.
\end{abstract}

\noindent
{\bf Keywords}: ergodic equivalence relations\\
{\bf MSC}: 37A20, 37A15\\

\noindent
\tableofcontents

\section{Introduction}
Let $(X,\mu)$ denote a standard Borel probability space and $\cR \subset X \times X$ a Borel equivalence relation. We say that $\cR$ is {\bf discrete} if for all $x\in X$, the $\cR$-class of $x$, denoted $[x]_\cR$, is countable. All equivalence relations considered in this note are discrete regardless of whether this is mentioned explicitly. We endow $\cR$ with two measures $\mu_L$ and $\mu_R$ satisfying: 
$$\mu_L(S) = \int |S_x|~d\mu(x), \quad \mu_R(S) = \int |S^y|~d\mu(y)$$
where 
$$S_x = \{y \in X:~(x,y) \in S\}, \quad S^y=\{x\in X:~(x,y) \in S\}.$$
We say that $\mu$ is {\bf $\cR$-quasi-invariant} if $\mu_L$ and $\mu_R$ are in the same measure class. We say $\mu$ is {\bf $\cR$-invariant} or $\cR$ is {\bf measure-preserving} if $\mu_L=\mu_R$. A subset $A \subset X$ is {\bf $\cR$-invariant} or {\bf $\cR$-saturated} if it is a union of $\cR$-classes. We say $\cR$ is {\bf ergodic} if for every measurable $\cR$-invariant subset $A \subset X$, $\mu(A) \in \{0,1\}$. In the sequel we will always assume $\mu$ is $\cR$-quasi-invariant.


A Borel subset $\cS \subset \cR$ is a {\bf subequivalence relation} if it is a Borel equivalence relation in its own right. If $\cS,\cS' \subset \cR$ are two subequivalence relations whose symmetric difference is null with respect to $\mu_L$ (or equivalently $\mu_R$) then we say $\cS$ and $\cS'$ agree almost everywhere (a.e.). From here on we will not distinguish between relations that agree almost everywhere. We say $\cS$ is {\bf proper} if it does not equal $\cR$ a.e. We usually write $\cS \le \cR$ to mean $\cS \subset \cR$ (a.e.) when $\cS$ is a subequivalence relation.

The concept of a normal subequivalence relation was introduced in \cite{FSZ88, FSZ89} where it was shown that if $\cS \subset \cR$ is normal then there is a natural quotient object, denoted $\cR/\cS$, which is a discrete measured groupoid. Moreover, if $\cS$ is ergodic then $\cR/\cS$ is a countable group and in this case we say $\cR$ {\bf surjects} onto $\cR/\cS$ and $\cR/\cS$ is a quotient of $\cR$. 

Unfortunately, the definition of normality given in \cite{FSZ88, FSZ89} is rather complicated. In \S \ref{sec:normal} we provide a simple characterization:  $\cS$ is {\bf normal} if and only if it is the kernel of a Borel morphism $c:\cR \to \cG$ where $\cG$ is a discrete Borel groupoid. We also show in \S \ref{sec:outer} that when $\cS$ is normal and ergodic in $\cR$ then $\cR/\cS$ is isomorphic with the full group $[\cR]$ modulo the normalizer of $[\cS]$ in $[\cR]$. This fact was probably known to the authors of \cite{FSZ88, FSZ89} but it is not explicit stated.


If $\cS \subset \cR$ is an arbitrary subequivalence relation and $\cR$ is ergodic then, as shown in \cite{FSZ89}, there exists a number $N \in \N \cup \{\infty\}$ such that for a.e. $x\in X$, $[x]_\cR$ contains exactly $N$ $\cS$-classes. This number $N$ is called the {\bf index} of $\cS$ in $\cR$ and is denoted $N=[\cR:\cS]$. Our first main result:
\begin{thm}\label{thm:1}
There exists an ergodic discrete probability-measure-preserving equivalence relation $\cR$ such that $\cR$ does not contain any proper ergodic normal subequivalence relations. Moreover, $\cR$ does not contain any proper finite-index ergodic subequivalence relations.
\end{thm}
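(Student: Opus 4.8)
The plan is to realize $\cR$ as the orbit equivalence relation of a Bernoulli action and to reduce both requirements to statements about cocycles, which then follow from cocycle superrigidity together with simplicity of the acting group. Fix a countably infinite \emph{simple} group $\Gamma$ with Kazhdan's property (T); such groups are known to exist --- for instance, certain Kac--Moody lattices are both abstractly simple (Caprace--R\'emy) and Kazhdan (via property (T) for the associated building automorphism groups). Let $\Gamma \curvearrowright (X,\mu)$ be the Bernoulli shift, with $X = [0,1]^{\Gamma}$ and $\mu$ the $\Gamma$-fold product of Lebesgue measure, and put $\cR := \cR_{\Gamma \curvearrowright X}$. Since $\Gamma$ is infinite the action is essentially free, so $\cR$ is a discrete probability-measure-preserving equivalence relation, and it is ergodic because Bernoulli actions are mixing. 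The main external tool is Popa's cocycle superrigidity theorem: since $\Gamma$ has property (T) and the action is Bernoulli (hence malleable, mixing and weakly mixing), every Borel cocycle $c \colon \cR \to Q$ into a countable group $Q$ is cohomologous to $\rho \circ \pi$, where $\pi \colon \cR \to \Gamma$, $\pi(x,\gamma x) = \gamma$, is the tautological orbit cocycle and $\rho \colon \Gamma \to Q$ is a group homomorphism. Because $\Gamma$ is simple, $\rho$ is either trivial or injective.

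For the normal case, suppose $\cS \trianglelefteq \cR$ is normal and ergodic. By the characterization of normality in \S\ref{sec:normal} together with \cite{FSZ88,FSZ89}, $Q := \cR/\cS$ is a countable group and $\cS = c^{-1}(1_Q)$ for the quotient cocycle $c \colon \cR \to Q$. A short argument shows $\cS$ is ergodic if and only if the skew-product relation $\cR_c$ on $X \times Q$ is ergodic: restricting $\cR_c$ to $X \times \{1_Q\}$ returns $\cS$, and since $c$ is onto the $\cR_c$-saturation of $X \times \{1_Q\}$ is all of $X \times Q$, so invariant sets correspond. Now $c$ is cohomologous to $\rho\circ\pi$, hence $\cR_c \cong \cR_{\rho\circ\pi}$, the orbit relation of $\gamma\cdot(x,q) = (\gamma x, \rho(\gamma)q)$ on $X \times Q$ --- the action induced from $\ker\rho \curvearrowright X$, which is ergodic precisely when $\rho$ is surjective and $\ker\rho \curvearrowright X$ is ergodic. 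If $\rho$ is injective then $\ker\rho = \{1\}$ is not ergodic on the non-atomic space $X$, so $\cS$ is not ergodic; if $\rho$ is trivial then surjectivity forces $Q = \{1_Q\}$, i.e.\ $\cS = \cR$, so $\cS$ is not proper. Thus $\cR$ has no proper ergodic normal subequivalence relation.

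For the finite-index case, suppose $\cS \le \cR$ has finite index $N \ge 2$ and is ergodic. Choosing measurably an ordering of the $N$ $\cS$-classes of each $\cR$-class, with $[x]_\cS$ first, produces a permutation cocycle $\alpha \colon \cR \to S_N$ with $\cS = \alpha^{-1}(H)$, $H = \Stab_{S_N}(1)$. By superrigidity $\alpha$ is cohomologous to $\rho\circ\pi$ for some $\rho \colon \Gamma \to S_N$, and an infinite simple group has no nontrivial finite quotient, so $\rho$ is trivial and $\alpha$ is a coboundary: $\alpha(x,y) = \psi(x)\psi(y)^{-1}$ for a measurable $\psi \colon X \to S_N$. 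Then $x \mapsto H\psi(x)$ is constant on $\cS$-classes, hence --- as $\cS$ is ergodic --- a.e.\ equal to a single coset $H\psi_0$; writing $\psi(x) = h_x\psi_0$ with $h_x \in H$ gives $\alpha(x,y) = h_x h_y^{-1} \in H$ for all $(x,y)\in\cR$, whence $\cS = \alpha^{-1}(H) = \cR$, contradicting $N \ge 2$.

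The main obstacle is twofold. First, producing $\Gamma$: one needs a countably infinite group that is simultaneously abstractly simple and Kazhdan, and verifying that a candidate --- a Kac--Moody lattice, say --- genuinely enjoys both properties is the delicate point and is what makes the construction inevitably nonexplicit. Second, one must handle the dictionary between subequivalence relations and cocycles with care: that normal subrelations are exactly kernels of morphisms to discrete groupoids (\S\ref{sec:normal}), that ergodicity of $\cS$ matches ergodicity of the relevant skew product (a Mackey-range computation, cleanest in the finite-index case above), and that Popa's superrigidity applies to cocycles valued in arbitrary countable discrete groups (these lie in Popa's class $\mathcal{U}_{\mathrm{fin}}$). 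The rest is bookkeeping.
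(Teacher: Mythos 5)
Your construction is the same as the paper's (a simple Kazhdan group, its Bernoulli shift, and Popa's cocycle superrigidity reducing every quotient cocycle to a homomorphism $\rho$ of the simple group), and your proposal is correct; but the way you exploit superrigidity differs from the paper in both halves, so a comparison is worth recording. For the normal case, the paper rules out injective $\rho$ by a direct Borel--Cantelli argument showing that injectivity would force a.e.\ $\cN$-class to be finite, contradicting ergodicity of $\cN$ on a nonatomic space; you instead run a Mackey/skew-product computation: $\cS$ ergodic forces the skew product $\cR_c$ on $X\times Q$ to be ergodic, which forces $\rho$ surjective with $\ker\rho$ acting ergodically, killing both the injective and the nontrivial-kernel-but-proper cases at once. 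This is a clean alternative; to make it airtight you should say that ``$c$ is onto'' means class-surjectivity of the quotient cocycle (which is exactly condition (4)(b) of the paper's Theorem \ref{thm:normal}, satisfied by the canonical morphism to $\cR/\cS$), and note that you only need the implication ``$\cS$ ergodic $\Rightarrow$ $\cR_c$ ergodic'' (your stated ``iff'' is more than required, and the converse is not obvious). For the finite-index case the divergence is larger: the paper does not untwist the permutation cocycle directly on $\cS$; it proves Proposition \ref{prop:finite-index} (the kernel of the permutation cocycle is normal and finite index but possibly non-ergodic, one passes to an ergodic component and extends back via the compression Lemma \ref{lem:compression} to get an \emph{ergodic normal} finite-index subrelation inside $\cS$) and then applies the normal-case theorem with ``no nontrivial finite quotients.'' Your argument -- untwist $\alpha$ to a coboundary, use ergodicity of $\cS$ to pin the coset $H\psi(x)$, and conclude $\alpha$ takes values in $H=\Stab_{S_N}(1)$ so $\cS=\alpha^{-1}(H)=\cR$ -- is shorter and bypasses the compression lemma entirely. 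What the paper's route buys is a reusable intermediate statement (any ergodic finite-index subrelation contains an ergodic normal finite-index one) and a version of Theorem \ref{thm:superrigid} stated for general cocycle-superrigid actions; what yours buys is a more self-contained and direct proof of Theorem \ref{thm:1} itself.
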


The proof of Theorem \ref{thm:1} is based on Popa's Cocycle Superrigidity Theorem \cite{Po07}. The ergodicity condition is necessary because if $\cP$ is any finite measurable partition of $X$ then the subequivalence relation $\cS$ defined by: $(x,y)\in \cS$ if and only $(x,y)\in \cR$ and $x,y$ are in the same part of $\cP$, has finite-index and is normal in $\cR$. Of course, $\cS$ is not ergodic if $\cP$ is nontrivial.


\begin{remark}
Stefaan Vaes constructed the first explicit examples of type $II_1$ von Neumann algebras having only trivial finite index subfactors in \cite{Va08} by a twisted group-measure space construction.  Moreover it follows from \cite[Theorem 6.4]{Va08} that the orbit-equivalence relation of the generalized Bernoulli shift action $SL(2,\Q) \ltimes \Q^2 \cc (X_0,\mu_0)^{\Q^2}$ has no finite index ergodic subequivalence relations and no nontrivial finite extensions. Here $(X_0,\mu_0)$ is any nontrivial atomic probability space with unequal weights (so that it has trivial automorphism group). More generally, the proof of \cite[Theorem 6.4]{Va08} shows how to describe all finite index subequivalence relations, extensions and bimodules whenever cocycle super-rigidity applies. 
\end{remark}

We say that a measured equivalence relation $\cR$ is {\bf large} if for every countable group $G$ there exists an ergodic normal subequivalence relation $\cN \le \cR$ such that $\cR/\cN \cong G$. 

Next we prove that some treeable equivalence relations are large:
\begin{thm}\label{thm:3}
Suppose $\cR$ is a treeable ergodic equivalence relation on $(X,\mu)$ of cost $>1$ and there exists an ergodic primitive proper subequivalence relation $\cS\le \cR$. Then $\cR$ is large. 
\end{thm}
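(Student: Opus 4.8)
The plan is to produce, for every countable group $G$, a Borel cocycle $c\colon\cR\to G$ — equivalently a morphism into $G$ regarded as a one-object discrete Borel groupoid — with essential range all of $G$ and with ergodic kernel $\cN:=\{(x,y)\in\cR:\ c(x,y)=e\}$. Granting this, $\cN$ is normal by the characterization in \S\ref{sec:normal}, and since $\cN$ is ergodic and is the kernel of a surjective morphism, one reads off $\cR/\cN\cong G$ from the construction of the quotient of \cite{FSZ88,FSZ89} (cf. \S\ref{sec:outer}); hence $\cR$ is large. It is moreover enough to treat $G=F_\infty$, the free group of countably infinite rank: composing $c\colon\cR\to F_\infty$ with any surjection $\pi\colon F_\infty\twoheadrightarrow G$ gives a surjective morphism onto $G$ whose kernel $c^{-1}(\ker\pi)$ contains $\cN$, hence is again ergodic and (being a kernel) normal, so $\cR/c^{-1}(\ker\pi)\cong G$.

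So fix a treeing $\Phi=\{\phi_i\}$ of $\cR$. Because the graph that $\Phi$ puts on each $\cR$-class is a tree, a cocycle $c\colon\cR\to F_\infty$ may be prescribed \emph{arbitrarily} by choosing a measurable label in $F_\infty$ on each oriented edge, with $c(x,y)$ read off as the product of labels along the unique reduced edge-path from $y$ to $x$; cancellation makes the product path-independent, so the cocycle identity holds automatically. Surjectivity is then cheap: $\cR$ is ergodic of cost $>1$, so $(X,\mu)$ is non-atomic, some $\phi_i$ has non-atomic domain, and letting its label take distinct free generators $a_1,a_2,\dots$ on positive-measure pieces of that domain forces the essential range — always a subgroup of $F_\infty$ containing every value attained on a set of positive measure — to be all of $F_\infty$.

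The real content is to make $\cN$ ergodic, and this is where $\cS$ is used, via the observation: if the restricted cocycle $c\res\cS\colon\cS\to F_\infty$ has kernel $\cN\cap\cS$ that is ergodic \emph{as an equivalence relation on $(X,\mu)$}, then $\cN$ is ergodic, because every $\cN$-saturated set is a fortiori $(\cN\cap\cS)$-saturated. So it suffices to choose the treeing-labels so that the induced cocycle on $\cS$ has ergodic kernel (an ergodic equivalence relation certainly admits ergodic extensions — for instance by $\Z$ — so this is not obstructed in principle), while the labels on the remaining edges carry the other free generators and preserve global surjectivity. The cleanest way to organise this is a Baire category argument in the Polish group of cocycles $\cR\to F_\infty$ with the topology of convergence in measure: show that the set of $c$ for which $c\res\cS$ has ergodic kernel and $c$ is surjective is comeager, by checking, for each pair drawn from a countable generating family of positive-measure subsets of $X$, that the set of $c$ failing to $(\cN\cap\cS)$-connect that pair is nowhere dense.

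I expect the crux to be precisely this last genericity assertion — that a typical cocycle, restricted to $\cS$, has an ergodic skew product — and this is where the hypotheses earn their keep. Cost $>1$ forces the orbit-trees of (a treeing of) $\cR$ to branch, so that the zero sets on the tree of the random-walk-type processes controlling $\cN\cap\cS$ are genuinely large rather than transient, as they would be over a line, i.e. for a cost-$1$ relation; and the properness and primitivity of $\cS$ keep $\cS$ embedded in $\cR$ non-degenerately and control the intermediate subequivalence relations, which is what should allow a generic perturbation of the cocycle to amalgamate the $\cS$-classes into a single ergodic $(\cN\cap\cS)$-class up to null sets. By comparison, the well-definedness of the treeing cocycle, the surjectivity, and the reduction of general $G$ to the case $G=F_\infty$ are all routine.
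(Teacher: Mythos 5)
You have the right frame --- build a cocycle $c:\cR\to\F_\infty$ with ergodic kernel and full essential range, identify the quotient with the essential range, and push forward along surjections $\F_\infty\twoheadrightarrow G$ --- and the routine parts (defining a cocycle by labelling the edges of a treeing, surjectivity, the reduction to $\F_\infty$) agree with the paper's proof of Theorem \ref{thm:key}. But the one step you defer, ergodicity of the kernel, is the whole theorem, and your proposed route to it misses the actual role of primitivity. Primitivity is not a soft non-degeneracy hypothesis to be exploited generically: it hands you treeings $G_\cS\subset G_\cR$, and the proof is to label every edge of $G_\cS$ by the identity and only the leftover edges $G_\cR\setminus G_\cS$ (which have positive $\mu_L$-measure by properness) by values hitting every free generator on a positive-measure set. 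For $(x,y)\in\cS$ the unique $G_\cR$-path from $x$ to $y$ is its $G_\cS$-path, so $c(x,y)=e$; hence $\cS\subseteq\Ker(c)$, and ergodicity of $\Ker(c)$ is immediate from ergodicity of $\cS$. Lemma \ref{lem:basic2} then gives $\cR/\Ker(c)\cong\F_\infty$ (the essential range is a subgroup containing all generators), and likewise after composing with a surjection onto $G$. No Baire category is needed, and cost $>1$ and properness enter only to guarantee enough leftover edges for surjectivity, not through any recurrence or ``branching'' mechanism.

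By contrast, your genericity assertion --- that for a comeager set of labellings of an \emph{arbitrary} treeing of $\cR$ the kernel of $c\res\cS$ is ergodic --- is both unproven and doubtful already in the motivating examples. For a Bernoulli shift over $\F_n$, the primitive subrelation $\cS$ is generated by a single ergodic automorphism, hence is amenable. If $\Ker(c\res\cS)$ is ergodic, it is normal in $\cS$ (Theorem \ref{thm:normal}) with quotient isomorphic to the essential range $\Lambda\le\F_\infty$ of $c\res\cS$ (Lemma \ref{lem:basic2} applied to $\cS$); amenability of $\cS$ forces $\Lambda$ to be amenable (the Mackey range of a cocycle over an amenable relation is an amenable action, and here it is the $\Lambda$-action on a point), so $\Lambda$ must be trivial or infinite cyclic. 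Since attaining a given value on a positive-measure subset of $\cS$ is stable under small perturbations of the labelling, confining the essential range of $c\res\cS$ to such a tiny subgroup is the opposite of a generic condition; it is exactly the constraint that the adapted treeing and the trivial labelling of $G_\cS$ impose by hand. So the gap is not a technical genericity lemma left to the reader: the comeager-set strategy is the wrong mechanism, and the missing idea is the adapted treeing with $c$ trivial on $G_\cS$.
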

The terms treeable and primitive are explained in \S \ref{sec:treeable} below. (Primitive means the same as free factor; this notion was studied by Damien Gaboriau \cite{Ga00,Ga05}). For example, the orbit-equivalence relation of any Bernoulli shift over a rank $\ge 2$ free group satisfies the hypothesis above and hence is large. It is an open question whether every ergodic treeable equivalence relation with cost $>1$ satisfies the hypotheses of Theorem \ref{thm:3}.  In particular it is unknown whether every such equivalence relation surjects every countable group. In unpublished work, Clinton Conley, Damien Gaboriau, Andrew Marks and Robin Tucker-Drob have proven that any treeable {\em strongly ergodic} pmp equivalence relation satisfies the hypothesis of Theorem \ref{thm:3} and therefore surjects onto every countable group.




{\bf Organization:} In \S \ref{sec:normal} we provide a simple characterization for normality of a subequivalence relation. \S \ref{sec:quotient} provides an algebraic description for the quotient $\cR/\cN$. \S \ref{sec:Bernoulli} reviews generalized Bernoulli shifts and Popa's Cocycle Superrigidity Theorem \cite{Po07}. The latter is used in \S \ref{sec:simple} to prove Theorem \ref{thm:1}.  The last section \S \ref{sec:treeable} proves Theorem \ref{thm:3}. The proof is mostly independent of the rest of the paper.

{\bf Acknowledgements}. Thanks to Vadim Alekseev for suggesting the use of Popa's Cocycle Superrigidity Theorem to prove Theorem \ref{thm:1}; to Alekos Kechris for some inspiring questions, many helpful conversations and many corrections; to Robin Tucker-Drob for helpful conversations; to Pierre-Emmanuel Caprace for conversation about Kac-Moody groups that, with super-rigidity results, yield more examples of equivalence relations without finite extensions.


\section{A simple criterion for normality}\label{sec:normal}

\begin{defn}[Choice functions]
Let $\cR$ be an ergodic discrete Borel equivalence relation on $(X,\mu)$. Let $\cS \subset \cR$ be a Borel subequivalence relation and let $N=[\cR:\cS]$. A {\bf family of choice functions} is a set $\{\phi_j\}_{j=1}^N$ of Borel functions $\phi_j:X \to X$ such that for each $x\in X$, $\{ [\phi_j(x)]_\cS\}_{j=1}^N$ is a partition of $[x]_\cR$. \cite[Lemma 1.1]{FSZ89} shows that a family of choice functions exists.
\end{defn}

\begin{thm}\label{thm:normal}
Let $\cR$ be an ergodic discrete Borel equivalence relation on $(X,\mu)$. Let $\cS \subset \cR$ be a subequivalence relation.  The following are equivalent.
\begin{enumerate}
\item $\cS$ is normal in $\cR$ in the sense of \cite[Definition 2.1]{FSZ89}.
\item There are choice functions $\{\phi_j\}$ for $\cS \subset \cR$ with $\phi_j \in \rm{End}_\cR(\cS)$ for all $j$. This means that if $(x,y) \in \cS$ then $(\phi_j(x),\phi_j(y)) \in \cS$.
\item The extension $\rho:\hat{\cR} = \cS \times_\sigma J \to \cS$ is normal in the sense of Zimmer \cite{Zi76};
\item There is a discrete, ergodic measured groupoid $(\cH,\nu)$ and a homomorphism $\theta:\cR \to \cH$ such that
\begin{enumerate}
\item $\ker(\theta)=\cS$;
\item $\theta$ is class-surjective in the following sense: for any $h \in \cH$ and $x\in X$ with $\theta(x)$ equal to the source of $h$,  there exists $y\in [x]_\cR$ with $\theta(y,x)=h$;
\item for any discrete ergodic measured groupoid $(\cH',\nu')$ and homomorphism $\theta':\cR \to \cH'$ with $\ker(\theta') \supset \cS$ there is a homomorphism $\kappa:\cH \to \cH'$ with $\kappa \theta = \theta'$;
\end{enumerate}
\item there is a discrete Borel groupoid $\cG$ and a Borel homomorphism $c:\cR \to \cG$ with $\cS=\Ker(c)$.
\end{enumerate}
\end{thm}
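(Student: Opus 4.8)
The plan is to prove the cycle of implications rather than attempting each equivalence directly; the heart of the matter is the new condition (5), so I would organize everything around it. The implications among (1)--(4) are essentially unwinding of definitions (or are taken from \cite{FSZ89, Zi76}), so my real work is to link (5) to the rest, and the cleanest route is to show $(5)\Rightarrow(2)\Rightarrow(4)\Rightarrow(5)$ and separately $(1)\Leftrightarrow(2)$, so that (5) joins the chain. Here condition (4) is most useful as the target, since it is a universal-property characterization: once I know there is \emph{some} homomorphism $c:\cR\to\cG$ of discrete Borel groupoids with $\ker(c)=\cS$, I can replace $\cG$ by the (measured, ergodic) groupoid generated by the image and the measure $\mu$ pushed forward, which manufactures the object in (4), and the universal property in (4)(c) then pins down $\cR/\cS$.

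First I would prove $(5)\Rightarrow(2)$. Given $c:\cR\to\cG$ with $\Ker(c)=\cS$, fix a family of choice functions $\{\phi_j\}$ for $\cS\subset\cR$ (which exists by \cite[Lemma 1.1]{FSZ89}). The point is that membership in the same $\cS$-class is detected by the value of $c$: since $(x,y)\in\cS$ iff $c(y,x)$ is the identity at $c(x)$, and since for $(x,y)\in\cS$ we have $c(\phi_j(y),\phi_j(x)) = c(\phi_j(y),y)\,c(y,x)\,c(x,\phi_j(x))$ — wait, more carefully: $c(\phi_j(x),x)$ and $c(\phi_j(y),y)$ need not be identities, so I would instead argue that $[\phi_j(x)]_\cS$ is characterized as the set of $z\in[x]_\cR$ with $c(z,\phi_j(x))$ the identity, and because $\{[\phi_j(x)]_\cS\}_j$ partitions $[x]_\cR$ compatibly as $x$ moves within an $\cS$-class, one gets $(\phi_j(x),\phi_j(y))\in\cS$ for $(x,y)\in\cS$ after possibly permuting the index set by a Borel function of the $\cS$-class; absorbing that permutation into the choice of $\{\phi_j\}$ (using ergodicity to make it globally consistent) yields $\phi_j\in\End_\cR(\cS)$. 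Next, $(2)\Rightarrow(1)$ is more or less the content of \cite[Definition 2.1]{FSZ89} once one checks the cocycle-into-$\Sym(N)$ (or into the groupoid $\cS\times_\sigma J$) built from how the $\phi_j$ permute $\cS$-classes is well defined; $(1)\Leftrightarrow(3)$ and $(1)\Rightarrow(4)$ I would cite from \cite{FSZ89} after recalling that the Zimmer-normal extension $\hat\cR=\cS\times_\sigma J\to\cS$ packages exactly the permutation cocycle. For $(4)\Rightarrow(5)$ there is nothing to do: a discrete ergodic measured groupoid is in particular a discrete Borel groupoid, and a homomorphism of measured groupoids is Borel (on a conull set), so $\theta$ itself witnesses (5). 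Finally $(2)\Rightarrow(4)$: from the $\phi_j\in\End_\cR(\cS)$ one builds the permutation cocycle $\sigma:\cR\to\Sym(N)$ by $\sigma(x,y)\cdot j = k$ where $[\phi_j(y)]_\cS = [\phi_k(x)]_\cS$, forms the semidirect-type groupoid $\cH = \cS\times_\sigma J$ (or its ergodic decomposition, a group when $\cS$ is ergodic), and checks the universal property (4)(c) by noting any $\theta':\cR\to\cH'$ killing $\cS$ factors through the permutation action it induces on $\cS$-classes.

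The main obstacle I anticipate is the measurability/Borel-selection bookkeeping in $(5)\Rightarrow(2)$: the index set labelling $\{[\phi_j(x)]_\cS\}$ is only canonically a set, not canonically ordered, so turning ``$c$ separates $\cS$-classes'' into ``the \emph{same} choice functions $\phi_j$ are $\cS$-endomorphisms'' requires either a Borel linear ordering of the $\cS$-classes inside each $\cR$-class induced by $c$ (possible when $\cG$ has enough structure, e.g.\ by transporting a fixed ordering of $\cH$-fibers) or an appeal to a measurable selection theorem plus ergodicity to glue local choices. I would handle this by first reducing to the case $\cG=\cH$ is the measured groupoid from (4) — so that its unit space carries $\mu$ and I may discard null sets freely — and then using that the left translation action of $\cH$ on the $c$-preimages gives a free, transitive, Borel action on $[x]_\cR/\cS$ over each $\cH$-fiber, from which a Borel system of representatives (hence the desired $\phi_j$) is extracted by the Lusin--Novikov uniformization theorem. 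The remaining verifications — cocycle identity for $\sigma$, that $\ker$ in the groupoid sense really is $\cS$, and the universal factorization — are routine diagram-chases once the representatives are in place.
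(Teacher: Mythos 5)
Your proposal takes essentially the same route as the paper: the equivalence of (1)--(4) is simply quoted from \cite{FSZ89}, $(4)\Rightarrow(5)$ is immediate, and the only real work, $(5)\Rightarrow(2)$, is done exactly as in your ``obstacle'' paragraph --- a fixed Borel enumeration of the source fibers of $\cG$ (Feldman--Moore / Lusin--Novikov) labels the $\cS$-classes inside each $\cR$-class in an $\cS$-invariant way, and Borel selections compatible with that labeling (with duplicates removed) give choice functions lying in $\End_\cR(\cS)$. Two minor remarks: the paper works purely Borel-theoretically, never needing the pushed-forward measured groupoid, and the induced correspondence from $\cS$-classes to a fiber of $\cG$ is only injective, not transitive, but neither point affects your argument.
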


\begin{proof}
The equivalence of the first four statements is \cite[Theorem 2.2]{FSZ89}. Clearly (4) implies (5). So we need only show that (5) implies (2). So let $\cG$ be a discrete Borel groupoid with unit space $\cG^0$, source and range maps $\so,\mathfrak{r}:\cG \to \cG^0$. As in the proof of the Feldman-Moore Theorem \cite[Theorem 1]{FM77}, there exists a countable family of Borel functions $\{f_j\}_{j=1}^\infty$, $f_j:\cG^0 \to \cG$ such that for every $x\in \cG^0$,
$$\{f_j(x)\}_{j \in \N} = \so^{-1}(x).$$
Let $c:\cR\to \cG$ be a Borel homomorphism. Because $\cR$ is ergodic there exists $n \in \N \cup \{\infty\}$ be such that  $\{c(x,y):~y \in [x]_\cR\}$ has cardinality $n$ for a.e. $x$. Let $\psi_j:X \to X$ be a Borel map such that if $\psi_j(x)=y$ then $y \in [x]_\cR$ and $c(x,y)=f_j(c(x,y))$. 

For each $x\in X$, define $\phi_1(x) = \psi_{1}(x)$. For $i>1$ inductively define $\phi_i(x)=\psi_{j}(x)$ where $j$ is the smallest number such that there does not exist $k<i$ with $\phi_k(x)=\psi_{j}(x)$. Then $\{\phi_j\}_{j=1}^n$ is family of choice functions satisfying (2). 

\end{proof}

\section{The quotient group}\label{sec:quotient} 


The purpose of this section is to provide an algebraic description of the quotient $\cR/\cN$ when $\cN\vartriangleleft \cR$ is normal. 

To be precise, let $\cR$ denote an ergodic probability-measure-preserving discrete equivalence relation on a probability space $(X,\mu)$. Let $\Aut(X,\mu)$ be the group of all measure-preserving automorphisms $\phi:X \to X$. We implicitly identify two automorphisms that agree almost everywhere. Let $\Aut(\cR)$  be the subgroup of all $\phi \in \Aut(X,\mu)$ such that $x \cR y \Rightarrow \phi(x) \cR \phi(y)$ (for $\mu_L$-a.e. $(x,y)$). Also let $[\cR]=\Inn(\cR)$ be the subgroup of all $\phi \in \Aut(\cR)$ such that $x \cR \phi(x)$ for a.e. $x$. Then $[\cR]$ is normal in $\Aut(\cR)$, so we may consider the quotient $\Out(\cR):=\Aut(\cR)/[\cR]$. 

In the sequel we use the word `countable' to mean `countable or finite'. 

Let $\Gamma$ be a countable subgroup of $\Aut(\cR)$. Let $\cR_\Gamma:=\langle\cR,\Gamma\rangle$ denote the smallest equivalence relation on $X$ such that $\cR \subset \cR_\Gamma$ and $(x,\gamma x) \in \cR_\Gamma$ for all $x\in X$ and $\gamma \in \Gamma$. Observe that this is a discrete probability-measure-preserving ergodic equivalence relation and $\cR$ is normal in $\cR_\Gamma$.

\begin{lem}
If $\Gamma,\Lambda \le \Aut(\cR)$ are countable subgroups and $\Gamma [\cR] = \Lambda [\cR]$ then $\cR_\Gamma=\cR_\Lambda$. 
\end{lem}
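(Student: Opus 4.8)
The plan is to reduce the equality $\cR_\Gamma=\cR_\Lambda$ to checking, for each $\gamma\in\Gamma$, that the generating pair $(x,\gamma x)$ of $\cR_\Gamma$ already lies in $\cR_\Lambda$ for a.e.\ $x$, and then to invoke the minimality of $\cR_\Gamma$ on a suitable invariant conull set. First I would unpack the hypothesis. Since $[\cR]=\Inn(\cR)$ is normal in $\Aut(\cR)$ and $\Gamma,\Lambda\le\Aut(\cR)$, the sets $\Gamma[\cR]$ and $\Lambda[\cR]$ are subgroups of $\Aut(\cR)$, and the assumption $\Gamma[\cR]=\Lambda[\cR]$ is equivalent to: every $\gamma\in\Gamma$ can be written $\gamma=\lambda\rho$ with $\lambda\in\Lambda$ and $\rho\in[\cR]$, and symmetrically every element of $\Lambda$ factors through $\Gamma$ modulo $[\cR]$. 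By this symmetry it suffices to prove $\cR_\Gamma\subseteq\cR_\Lambda$ up to a $\mu_L$-null set.

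Next, fix $\gamma\in\Gamma$ and a factorization $\gamma=\lambda\rho$ as above. For $\mu$-a.e.\ $x$ we have $x\,\cR\,\rho(x)$ by definition of $\Inn(\cR)$, so $(x,\rho(x))\in\cR\subseteq\cR_\Lambda$; and $(\rho(x),\lambda\rho(x))\in\cR_\Lambda$ by the very definition of $\cR_\Lambda$, which contains every pair of the form $(z,\lambda z)$ with $z\in X$. Transitivity of $\cR_\Lambda$ then gives $(x,\gamma x)=(x,\lambda\rho(x))\in\cR_\Lambda$ for $\mu$-a.e.\ $x$. As $\Gamma$ is countable, there is a single $\mu$-null Borel set $N$ such that $(x,\gamma x)\in\cR_\Lambda$ for all $x\notin N$ and all $\gamma\in\Gamma$.

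Finally, let $X_0:=X\setminus N'$, where $N'$ is the $\cR_\Gamma$-saturation of $N$. Since $\cR_\Gamma$ is probability-measure-preserving (as already observed after its definition), $N'$ is $\mu$-null; it is Borel and $\cR_\Gamma$-invariant, and $X_0\subseteq X\setminus N$. For $x\in X_0$ and $\gamma\in\Gamma$ we have $\gamma x\in[x]_{\cR_\Gamma}\subseteq X_0$ and $(x,\gamma x)\in\cR_\Lambda$, so $(x,\gamma x)\in\cR_\Lambda\cap(X_0\times X_0)$. Because $X_0$ is $\cR_\Gamma$-invariant, $\cR_\Gamma\cap(X_0\times X_0)$ is the smallest equivalence relation on $X_0$ containing $\cR\cap(X_0\times X_0)$ together with all pairs $(x,\gamma x)$, $x\in X_0$, $\gamma\in\Gamma$; and $\cR_\Lambda\cap(X_0\times X_0)$ is an equivalence relation on $X_0$ with both of these properties. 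Hence $\cR_\Gamma\cap(X_0\times X_0)\subseteq\cR_\Lambda\cap(X_0\times X_0)$, and since $X_0$ is conull this means $\cR_\Gamma\subseteq\cR_\Lambda$ almost everywhere; exchanging the roles of $\Gamma$ and $\Lambda$ gives equality.

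I do not anticipate a genuine difficulty: the only point requiring care is the passage from the almost-everywhere statements of the second step to an honest everywhere statement on a single $\cR_\Gamma$-invariant conull Borel set, so that the minimality characterization of $\cR_\Gamma$ can legitimately be applied. Conceptually the lemma simply records that $\cR_\Gamma$ is the orbit equivalence relation of the subgroup $\langle\Gamma,[\cR]\rangle=\Gamma[\cR]$ of $\Aut(X,\mu)$, which depends only on that subgroup; the argument above is a hands-on verification of this fact.
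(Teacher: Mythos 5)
Your argument is correct and is exactly the routine verification the paper dismisses as ``straightforward'': factor each $\gamma\in\Gamma$ as $\lambda\rho$ with $\lambda\in\Lambda$, $\rho\in[\cR]$ (using normality of $[\cR]$ in $\Aut(\cR)$), deduce $(x,\gamma x)\in\cR_\Lambda$ a.e., and pass to an $\cR_\Gamma$-invariant conull set to apply minimality, with the symmetric inclusion giving equality up to null sets, which is all the paper's convention requires.
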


\begin{proof}
This is straightforward.
\end{proof}

So if $\Gamma \le \Out(\cR)$ is any countable subgroup then we may define $\cR_\Gamma := \cR_{\Gamma'}$ where $\Gamma' \le \Aut(\cR)$ is any countable subgroup such that $\Gamma = \Gamma'[\cR]$. The next lemma follows immediately.

\begin{lem}\label{lem:order}
If $\Gamma \le \Lambda \le \Out(\cR)$ are countable subgroups then $\cR_\Gamma \le \cR_\Lambda$.
\end{lem}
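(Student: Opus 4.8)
The plan is to reduce the statement to the case of honest subgroups of $\Aut(\cR)$, where it follows immediately from the minimality built into the definition of $\cR_{(\,\cdot\,)}$, and then to transport the conclusion back to subgroups of $\Out(\cR)$ via the preceding lemma (independence of the choice of lift).

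First I would fix a countable subgroup $\Lambda' \le \Aut(\cR)$ with $\Lambda'[\cR]/[\cR] = \Lambda$, so that $\cR_\Lambda = \cR_{\Lambda'}$ by definition. Next I would produce a countable lift of $\Gamma$ that sits \emph{inside} $\Lambda'$. Since $\Gamma \le \Lambda = \Lambda'[\cR]/[\cR]$, each element $\gamma \in \Gamma$ is represented by some $\tilde{\gamma} \in \Lambda'$; choosing one such $\tilde\gamma$ for every $\gamma \in \Gamma$, let $\Gamma'$ be the subgroup of $\Lambda'$ generated by $\{\tilde\gamma : \gamma \in \Gamma\}$. As $\Gamma$ is countable, $\Gamma'$ is countable, and by construction $\Gamma' \le \Lambda'$ while the image of $\Gamma'$ in $\Out(\cR)$ is the subgroup generated by $\{\gamma : \gamma \in \Gamma\}$, which is exactly $\Gamma$. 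Hence $\Gamma'[\cR]/[\cR] = \Gamma$, so $\cR_\Gamma = \cR_{\Gamma'}$ by definition (and the preceding lemma guarantees this is independent of the particular choice of representatives).

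It then remains to observe that $\cR_{\Gamma'} \le \cR_{\Lambda'}$ for the honest inclusion $\Gamma' \le \Lambda'$ in $\Aut(\cR)$. Indeed $\cR_{\Lambda'}$ is an equivalence relation containing $\cR$ and satisfying $(x,\lambda x) \in \cR_{\Lambda'}$ for all $x \in X$ and all $\lambda \in \Lambda' \supseteq \Gamma'$; since $\cR_{\Gamma'}$ is by definition the smallest equivalence relation enjoying these properties with respect to $\Gamma'$, minimality gives $\cR_{\Gamma'} \subseteq \cR_{\Lambda'}$, i.e.\ $\cR_{\Gamma'} \le \cR_{\Lambda'}$. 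Putting the pieces together, $\cR_\Gamma = \cR_{\Gamma'} \le \cR_{\Lambda'} = \cR_\Lambda$.

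There is no genuine obstacle here; the only point that takes a moment's thought is that the lift $\Gamma'$ of $\Gamma$ can be arranged to lie inside the previously chosen lift $\Lambda'$ while staying countable, which is the elementary group-theoretic observation above (pick coset representatives within $\Lambda'$ and take the subgroup they generate). Everything else is immediate from the definition of $\cR_{(\,\cdot\,)}$ and the preceding lemma.
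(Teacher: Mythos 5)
Your proof is correct, and it fills in exactly the argument the paper leaves implicit (the paper simply states that the lemma "follows immediately" from the definition of $\cR_\Gamma$ for subgroups of $\Out(\cR)$ and the preceding well-definedness lemma). Choosing the lift of $\Gamma$ inside the chosen lift $\Lambda'$ and then invoking minimality of $\cR_{\Gamma'}$ is precisely the intended reasoning, so there is nothing to add.
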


 \begin{thm}\label{thm:correspond}
Let $\cR \le \cU$ be ergodic discrete probability-measure-preserving equivalence relations on $(X,\mu)$. Suppose $\cR$ is normal in $\cU$. Then there exists a countable subgroup $\Gamma \le \Out(\cR)$ such that $\cU = \cR_\Gamma$. Moreover, $\cR_\Gamma/\cR$ is isomorphic to $\Gamma$. In particular, $[\cR_\Gamma:\cR]= |\Gamma|$. 
\end{thm}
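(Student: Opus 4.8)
The plan is to route everything through the Feldman--Sutherland--Zimmer quotient cocycle and then identify the relevant countable subgroup of $\Out(\cR)$ with it. Since $\cR$ is normal and ergodic in the ergodic relation $\cU$, the theory of \cite{FSZ89} supplies a countable group $Q=\cU/\cR$ with $|Q|=[\cU:\cR]$ together with a Borel groupoid homomorphism (the quotient cocycle) $\pi\colon\cU\to Q$ with $\ker\pi=\cR$ which is \emph{class--surjective}: for a.e.\ $x$ and every $q\in Q$ there is $y\in[x]_\cU$ with $\pi(x,y)=q$. I will also use that normality provides a countable group $\Theta\le\Aut(\cR)\cap[\cU]$ with $\langle\cR,\Theta\rangle=\cU$. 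This can be read off from the definition of normality in \cite{FSZ89}; alternatively, start from choice functions $\phi_j\in\End_\cU(\cR)$ as in Theorem~\ref{thm:normal}(2), indexed so that $\pi(x,\phi_q(x))=q$ a.e., promote each $\phi_q$ to a $\mu$--preserving bijection $V_q$ of $X$ whose graph still lies inside the single fibre $\pi^{-1}(q)$, and set $\Theta=\langle V_q:q\in Q\rangle$. (The key observation is that any $\mu$--preserving bijection $V$ with $\{(x,Vx):x\in X\}\subseteq\pi^{-1}(q)$ automatically lies in $\Aut(\cR)$: if $(y,z)\in\cR=\ker\pi$ then $\pi(Vy,Vz)=\pi(Vy,y)\pi(y,z)\pi(z,Vz)=q^{-1}eq=e$, so $(Vy,Vz)\in\cR$. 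Existence of such a $V$ is a routine Borel selection, since $\pi^{-1}(q)$ descends to the graph of a Borel bijection of the measured quotient of $\cR$, which one lifts using $\mu_L=\mu_R$ on $\cU$.)

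Then set $\Gamma:=\{\gamma[\cR]:\gamma\in\Theta\}\le\Out(\cR)$, a countable subgroup. Since $\cR_{\Gamma'}$ depends only on $\Gamma'[\cR]$, we have $\cR_\Gamma=\cR_\Theta=\langle\cR,\Theta\rangle=\cU$; thus $\Gamma$ witnesses the first assertion, and $\cR_\Gamma/\cR=\cU/\cR=Q$, so that $[\cR_\Gamma:\cR]=|Q|$. It remains to prove $\Gamma\cong Q$. For $\gamma\in\Theta$ the function $x\mapsto\pi(x,\gamma x)$ is defined a.e.\ and is $\cR$--invariant: if $(x,x')\in\cR$ then $\pi(x',\gamma x')=\pi(x',x)\pi(x,\gamma x)\pi(\gamma x,\gamma x')$, and $\pi(x',x)=e$ since $\cR=\ker\pi$ while $\pi(\gamma x,\gamma x')=e$ since $\gamma\in\Aut(\cR)$, whence $\pi(x',\gamma x')=\pi(x,\gamma x)$. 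By ergodicity of $\cR$ this function is a.e.\ equal to a constant $\sigma(\gamma)\in Q$. Writing $\pi(x,\gamma_1\gamma_2 x)=\pi(x,\gamma_2 x)\pi(\gamma_2 x,\gamma_1(\gamma_2 x))$, where the first factor is $\sigma(\gamma_2)$ and (as $\gamma_2$ preserves $\mu$) the second is a.e.\ $\sigma(\gamma_1)$, gives $\sigma(\gamma_1\gamma_2)=\sigma(\gamma_2)\sigma(\gamma_1)$; so $\sigma$ is an anti-homomorphism (equivalently a homomorphism $\Theta\to Q^{\mathrm{op}}$) with kernel $\{\gamma\in\Theta:(x,\gamma x)\in\cR\text{ a.e.}\}=\Theta\cap[\cR]$.

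The heart of the argument is that $\sigma$ is onto $Q$. Realize $\cU$ as the orbit equivalence relation of a countable group $G\le[\cU]$ with $\Theta\subseteq G$ and every generator outside $\Theta$ lying in $[\cR]$ (e.g.\ $G=\langle\Theta,\Delta\rangle$ for a countable $\Delta\le[\cR]$ with $\langle\Delta\rangle=\cR$, using Feldman--Moore). For fixed $g\in G$, write $g$ as a word in the generators and apply the cocycle identity along the path $x,\dots,gx$: each $\Delta$--letter contributes $e$ (it is an $\cR$--step) and each $\Theta$--letter contributes an element of $\sigma(\Theta)$ (using that the intermediate points are $\mu$--preserving images of $x$), so $\pi(x,gx)\in\sigma(\Theta)$ for a.e.\ $x$. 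Intersecting over the countably many $g\in G$, for a.e.\ $x$ we get $\{\pi(x,y):y\in[x]_\cU\}\subseteq\sigma(\Theta)$; class--surjectivity then forces $\sigma(\Theta)=Q$. Hence $\Theta/(\Theta\cap[\cR])\cong Q^{\mathrm{op}}\cong Q$, and since $[\cR]\vartriangleleft\Aut(\cR)$ the second isomorphism theorem gives $\Gamma=\Theta[\cR]/[\cR]\cong\Theta/(\Theta\cap[\cR])\cong Q=\cR_\Gamma/\cR$. In particular $[\cR_\Gamma:\cR]=|\Gamma|$.

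I expect the surjectivity of $\sigma$ to be the main obstacle: a priori distinct elements of $\Theta$ might become equal modulo $[\cR]$, so that $\Gamma$ would be only a proper quotient of $Q$, and it is precisely the class--surjectivity of $\pi$ (equivalently, that the $\phi_j$ meet every $\cR$--class inside each $\cU$--class) together with $\langle\cR,\Theta\rangle=\cU$ that excludes this. A secondary, purely technical point is the promotion of choice functions to genuine elements of $\Aut(X,\mu)$, which I would either import from \cite{FSZ89} or settle via the Borel selection remark in the first paragraph.
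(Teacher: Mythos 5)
Your argument is essentially correct, but it takes a genuinely different route from the paper: the paper disposes of this theorem by citing \cite[Theorems 2.12 and 2.13]{FSZ89}, whereas you rebuild the needed special case directly from the quotient cocycle $\pi\colon\cU\to Q=\cU/\cR$ with $\Ker(\pi)=\cR$ and its class-surjectivity, manufacturing a countable group $\Theta\le\Aut(\cR)\cap[\cU]$ with $\langle\cR,\Theta\rangle=\cU$ and reading off $\Gamma$ as its image in $\Out(\cR)$. What your route buys is a self-contained proof (given FSZ's Theorem 2.2) of exactly the statement used here; what the citation buys is that FSZ have already done the one genuinely nontrivial step you gloss over, namely the existence of a $\mu$-preserving bijection $V_q$ whose graph lies in the single fibre $\pi^{-1}(q)$. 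Your stated justification for that step does not parse as written: since $\cR$ is ergodic, the ``measured quotient of $\cR$'' is a point, so nothing useful ``descends'' to it. The fact is nevertheless true and standard: $\pi^{-1}(q)$ has as left and right fibres entire $\cR$-classes, so take a maximal (by exhaustion) partial measure-preserving injection with graph in $\pi^{-1}(q)$; if its domain were not conull, ergodicity of $\cR$ guarantees that for a.e.\ $x$ outside the domain the $\cR$-class $\{y:\pi(x,y)=q\}$ meets the complement of the range, so a Lusin--Novikov selection yields a nonnull extension, a contradiction. With that repaired (or imported from \cite{FSZ89}), the rest of your argument is sound, and in fact over-engineered in one place: once the $V_q$ are indexed so that $\pi(x,V_qx)=q$ a.e., you have $\sigma(V_q)=q$ by definition, so $\sigma(\Theta)=Q$ is immediate and the whole word-decomposition argument via a generating group $G=\langle\Theta,\Delta\rangle$ is unnecessary (it is only needed if you import $\Theta$ abstractly without the fibred normalization). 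The remaining steps --- $\cR$-invariance and ergodicity giving the a.e.\ constant $\sigma(\gamma)$, the anti-homomorphism identity fixed by $Q\cong Q^{\mathrm{op}}$, $\Ker(\sigma)=\Theta\cap[\cR]$, and the second isomorphism theorem giving $\Gamma\cong\Theta/(\Theta\cap[\cR])\cong Q=\cU/\cR$ with $[\cU:\cR]=|Q|$ --- are all correct.
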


\begin{proof}
This follows from  \cite[Theorems 2.12 and 2.13]{FSZ89}. 


\end{proof}

\begin{cor}\label{cor:normalizer}
Let $\cR \le \cU$ be ergodic discrete probability-measure-preserving equivalence relations on $(X,\mu)$. Suppose $\cR$ is normal in $\cU$. Then
$$\cU/\cR \cong N_\cU(\cR)/[\cR]$$
where
$$N_\cU(\cR) := \{g \in [\cU]:~ g[\cR]g^{-1}=[\cR]\}.$$
Moreover  $N_\cU(\cR)= [\cU] \cap \Aut(\cR).$ 
\end{cor}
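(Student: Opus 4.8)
\textbf{Proof proposal for Corollary \ref{cor:normalizer}.}

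The plan is to deduce this corollary from Theorem \ref{thm:correspond} together with the tautological description of $\cR_\Gamma$ in terms of a lift $\Gamma' \le \Aut(\cR)$. First I would invoke Theorem \ref{thm:correspond} to obtain a countable subgroup $\Gamma \le \Out(\cR)$ with $\cU = \cR_\Gamma$ and an isomorphism $\cU/\cR = \cR_\Gamma/\cR \cong \Gamma$. Thus it suffices to identify $\Gamma$, as a subgroup of $\Out(\cR) = \Aut(\cR)/[\cR]$, with $N_\cU(\cR)/[\cR]$; equivalently, after choosing a countable lift $\Gamma' \le \Aut(\cR)$ with $\Gamma = \Gamma'[\cR]$, to show that the subgroup of $\Aut(\cR)$ generated by $\Gamma'$ and $[\cR]$ equals $N_\cU(\cR)$. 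The inclusion $\langle \Gamma', [\cR]\rangle \subseteq N_\cU(\cR)$ is the easy direction: each $\gamma \in \Gamma'$ lies in $\Aut(\cR)$, so it normalizes $[\cR]$, and it lies in $[\cU] = [\cR_{\Gamma'}]$ because $(x,\gamma x) \in \cR_{\Gamma'}$ for a.e.\ $x$ by construction; and $[\cR] \subseteq N_\cU(\cR)$ is immediate. Hence $N_\cU(\cR) \supseteq \langle \Gamma',[\cR]\rangle$, and since $N_\cU(\cR) \supseteq [\cR]$ we get a well-defined surjection $\Gamma = \Gamma'[\cR] \twoheadrightarrow \langle\Gamma',[\cR]\rangle/[\cR] \subseteq N_\cU(\cR)/[\cR]$, which one checks is the restriction of the isomorphism $\cU/\cR \cong \Gamma$ composed with the obvious maps.

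The substantive direction is the reverse inclusion $N_\cU(\cR) \subseteq \langle \Gamma',[\cR]\rangle$, and this is where I expect the main obstacle to lie. The point is to show that every $g \in [\cU]$ normalizing $[\cR]$ already descends to an element of $\Gamma$ under the isomorphism $\cU/\cR \cong \Gamma$, i.e.\ that the quotient map $[\cU] \to \cU/\cR$ realizing this isomorphism (which exists because $\cR$ is normal and ergodic in $\cU$, so $\cU/\cR$ is a countable group obtained as in \cite{FSZ89}) restricts to a surjection $N_\cU(\cR) \to \Gamma$ whose kernel is exactly $[\cR]$. Concretely: given $g \in N_\cU(\cR)$, let $[g]$ denote its image in $\cU/\cR \cong \Gamma \le \Out(\cR)$; I would argue that $[g]$ admits a representative $h \in \Aut(\cR) \cap [\cU]$ that agrees with $g$ modulo $[\cR]$, using that the cocycle $\cU \to \cU/\cR$ defining the quotient is, for normal $\cR$, built precisely from choice functions in $\mathrm{End}_\cR(\cR) \cap \Aut$ (Theorem \ref{thm:normal}(2)); then $g h^{-1} \in [\cR]$, so $g \in h[\cR] \subseteq \langle\Gamma',[\cR]\rangle$. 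The second assertion, $N_\cU(\cR) = [\cU] \cap \Aut(\cR)$, then follows by the same bookkeeping: the inclusion $\subseteq$ is already recorded above, and for $\supseteq$, any $g \in [\cU]$ lying in $\Aut(\cR)$ automatically satisfies $g[\cR]g^{-1} = [\cR]$ by definition of $\Aut(\cR)$, hence $g \in N_\cU(\cR)$.

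I would organize the write-up in two short steps: (1) reduce to identifying $\Gamma$ with $N_\cU(\cR)/[\cR]$ via Theorem \ref{thm:correspond}; (2) prove the two inclusions, with the nontrivial one citing the structure of the quotient cocycle from \cite[Theorems 2.12 and 2.13]{FSZ89} and the characterization of normality in Theorem \ref{thm:normal}. The last line disposing of $N_\cU(\cR) = [\cU] \cap \Aut(\cR)$ is essentially a restatement once the main identification is in hand. The one place that will need care is making sure the isomorphism $\cU/\cR \cong \Gamma$ from Theorem \ref{thm:correspond} is compatible with — indeed induced by — the natural map $[\cU] \to \cU/\cR$; if \cite{FSZ89} phrases the quotient only in terms of the groupoid-valued cocycle, I may need a sentence observing that on the full group $[\cU]$ this cocycle is group-valued and its image is exactly $\Gamma$, which is again a consequence of normality and ergodicity of $\cR$ in $\cU$.
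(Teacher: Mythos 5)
There is a genuine gap in your hard direction, and it sits exactly where the paper puts its first (not last) step. You assert a ``quotient map $[\cU]\to\cU/\cR$ realizing this isomorphism'' and then restrict it to $N_\cU(\cR)$. No such homomorphism exists on all of $[\cU]$: if $c:\cU\to\cU/\cR$ is the canonical cocycle and $g\in[\cU]$ is arbitrary, the function $x\mapsto c(gx,x)$ need not be a.e.\ constant (e.g.\ an element of $[\cU]$ acting by different group elements on different positive-measure pieces), so $g$ has no well-defined ``image'' in $\cU/\cR$. The map $g\mapsto[g]$ is well defined precisely on elements that normalize: one must show that $g[\cR]g^{-1}=[\cR]$ (equivalently $g\in\Aut(\cR)\cap[\cU]$) forces $c(gx,x)=c(gy,y)$ for a.e.\ $(x,y)\in\cR$, and then invoke ergodicity of $\cR$ to get a constant. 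This verification is essentially the content of the paper's direct two-line computation that $N_\cU(\cR)=[\cU]\cap\Aut(\cR)$, which the paper proves \emph{first} and uses throughout; in your write-up that identity is deferred to the end as ``bookkeeping,'' while your main argument silently presupposes it (or the constancy statement equivalent to it). Relatedly, your appeal to ``choice functions in $\End_\cR(\cR)\cap\Aut$'' overstates Theorem \ref{thm:normal}(2): choice functions are Borel maps, not automorphisms, so they do not by themselves produce the representative $h\in\Aut(\cR)\cap[\cU]$ you want.

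The second soft spot is the compatibility you yourself flag: to conclude $g\in h[\cR]\subseteq\Gamma'[\cR]$ you need the isomorphism $\cU/\cR\cong\Gamma$ of Theorem \ref{thm:correspond} to be induced by $\gamma\mapsto$ (essential value of $c(\gamma x,x)$), and the one-sentence fix you propose (``on $[\cU]$ the cocycle is group-valued with image $\Gamma$'') rests on the same nonexistent homomorphism. The paper avoids this by a different bookkeeping for the reverse inclusion: having shown $N_\cU(\cR)=[\cU]\cap\Aut(\cR)$, it observes $\cR_{N_\cU(\cR)}\le\cU=\cR_\Gamma$ and quotes Lemma \ref{lem:order} and Theorem \ref{thm:correspond} to get $N_\cU(\cR)/[\cR]\le\Gamma$. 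Your route can be completed, and arguably more self-containedly: prove constancy as above, so that $\Theta:N_\cU(\cR)/[\cR]\to\cU/\cR$, $g[\cR]\mapsto\mathrm{ess}\,c(g\cdot,\cdot)$, is a well-defined homomorphism with kernel computation giving injectivity; then show $\Theta$ restricted to $\Gamma=\Gamma'[\cR]/[\cR]$ is already onto $\cU/\cR$, because $\cU=\cR_{\Gamma'}$ means a.e.\ $(x,y)\in\cU$ is joined by a finite chain of $\cR$-steps and $\Gamma'$-steps, so every essential value of $c$ lies in the subgroup generated by $\{\Theta(\gamma[\cR]):\gamma\in\Gamma'\}$ (and every element of $\cU/\cR$ is an essential value, as in Lemma \ref{lem:basic2}). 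That yields $N_\cU(\cR)/[\cR]=\Gamma\cong\cU/\cR$ without ever invoking the untested compatibility, but as written your proposal leaves both the well-definedness and the compatibility unproved.
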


\begin{proof}
We prove the last claim first. So suppose $g \in N_\cU(\cR)$ and $x\in X$. Then for any $\phi \in [\cR]$ we must have $g\phi g^{-1} \in [\cR]$. This implies $(x , g\phi g^{-1} x) \in \cR$ which implies, by replacing $x$ with $gx$, that $(gx, g\phi x) \in \cR$. Since $\phi$ is arbitrary and $[\cR]$ acts transitively on each $\cR$-class, this implies $g \in \Aut(\cR)$. Thus $N_\cU(\cR) \subset [\cU] \cap \Aut(\cR).$

Now suppose $g\in [\cU] \cap \Aut(\cR)$. If $\phi \in [\cR]$ then $(x,\phi x) \in \cR$. This implies $(gx, g\phi x) \in \cR$. By replacing $x$ with $g^{-1}x$ we obtain $(x,g\phi g^{-1}x) \in \cR$  which implies $g\phi g^{-1} \in [\cR]$ (since $x$ is arbitrary). Thus $g \in N_\cU(\cR)$. This proves $N_\cU(\cR)= [\cU] \cap \Aut(\cR).$

By Theorem \ref{thm:correspond} there exists a countable subgroup $\Gamma \le \Out(\cR)$ such that $\cR_\Gamma=\cU$ and $\cU/\cR\cong \Gamma$. Let $\tilde{\Gamma} \le \Aut(\cR)$ be the inverse image of $\Gamma$ under the quotient map $\Aut(\cR) \to \Aut(\cR)/[\cR] = \Out(\cR)$. Since $\cU = \cR_\Gamma$ we must have $\tilde{\Gamma} \le [\cU]$ and therefore $\tilde{\Gamma} \le N_\cU(\cR)$ which implies $\Gamma \le N_\cU(\cR)/[\cR]$. 

On the other hand, we clearly have $\cR_{N_{\cU}(\cR)} \le \cU=\cR_\Gamma$. So Lemma \ref{lem:order} implies $N_\cU(\cR)/[\cR] \le \Gamma$. Theorem \ref{thm:correspond} now implies $N_\cU(\cR)/[\cR] = \Gamma \cong \cU/\cR.$







\end{proof}

\section{Generalized Bernoulli shifts and cocycle superrigidity}\label{sec:Bernoulli}

Let $G$ be a countable group, $I$ a countable set on which $G$ acts and $(X_0,\mu_0)$ a standard probability space. We let $X_0^I$ be the set of all functions $x:I \to X_0$ and $\mu_0^I$ the product measure on $X_0^I$. Then $G \cc X_0^I$ by $(gx)(i) =x(g^{-1}i)$. This action preserves the measure $\mu_0^I$. The action $G \cc (X_0,\mu_0)^I$ is a {\bf generalized Bernoulli shift}. 

Our interest in these actions stems from Popa's Cocycle Super-rigidity Theorem. To explain, let $G \cc (X,\mu)$ be a probability-measure-preserving action. A {\bf cocycle} into a countable group $H$ is a Borel map $c: G \times X \to H$ such that
$$c(g_1g_2,x) = c(g_1,g_2x)c(g_2,x).$$ 
Alternatively, if $G \cc (X,\mu)$ is essentially free then we can identify $G \times X$ with the orbit-equivalence relation, denoted $\cR$, via $(g,x) \mapsto (gx,x)$. In this way, we can think of the cocycle as map from $\cR$ to  $H$. We say the action is {\bf cocycle superrigid} if for every such cocycle there is a homomorphism $\rho:G \to H$ and a Borel map $\phi:X \to H$ such that
$$c(g,x)= \phi(gx)^{-1} \rho(g) \phi(x).$$

The next result is a special case of celebrated theorem due to S. Popa \cite{Po07} (see also \cite[Theorem 3.2 and Proposition 2.3]{PV08}).

\begin{thm}\label{thm:popa}
Suppose every orbit of $G \cc I$ is infinite.  If $G$ has property (T) then the generalized Bernoulli shift $G \cc (X_0,\mu_0)^I$ is cocycle superrigid.
\end{thm}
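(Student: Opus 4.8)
The plan is to derive this from Popa's deformation/rigidity machinery \cite{Po07}, so I will only sketch the architecture of the argument rather than reprove it. Write $X=(X_0,\mu_0)^I$ and $\mu=\mu_0^I$, and fix a cocycle $c:G\times X\to H$; the goal is to produce a homomorphism $\rho:G\to H$ and a Borel map $\phi:X\to H$ with $c(g,x)=\phi(gx)^{-1}\rho(g)\phi(x)$. Two features of the Bernoulli shift drive the proof: \emph{malleability} (which holds for every Bernoulli action, regardless of $I$), and \emph{weak mixing} of $G\cc X$, which is exactly the content of the hypothesis that every orbit of $G\cc I$ is infinite. Property (T) of $G$ enters at one decisive point.

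First I would set up malleability. Identify $X\times X$ with $(X_0\times X_0)^I$ and build a continuous one-parameter family $(\alpha_t)_{t\in\R}$ of $(\mu\otimes\mu)$-preserving automorphisms of $X\times X$, acting coordinate-wise, each commuting with the diagonal action $G\cc X\times X$, with $\alpha_0=\mathrm{id}$ and with $\alpha_1$ carrying the first-coordinate copy of $X$ onto the second; this reduces to a standard construction of such a flow on the single factor $X_0\times X_0$. Then pull $c$ back along the first projection to a cocycle $c^{(0)}$ for the diagonal action on $X\times X$ (that is, $c^{(0)}(g,(x,y))=c(g,x)$), and deform it by $c^{(t)}:=c^{(0)}\circ(\mathrm{id}_G\times\alpha_t)$. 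Since $\alpha_t$ commutes with the diagonal action, each $c^{(t)}$ is again a cocycle, $t\mapsto c^{(t)}$ is continuous in measure, and $c^{(1)}$ is cohomologous to the pullback of $c$ along the \emph{second} projection.

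The heart of the proof is to untwist the deformation using property (T). For $t$ small, $c^{(t)}$ and $c^{(0)}$ agree, at each fixed $g$, off a set of small measure; reading this as a $(Q,\epsilon)$-almost-invariant vector $\delta_e$ for the unitary representation of $G$ on $L^2(X\times X)\otimes\ell^2(H)$ twisted by the pair $(c^{(0)},c^{(t)})$, property (T) furnishes a genuine invariant vector, which one converts — using weak mixing to discard degenerate vectors — into a measurable $w:X\times X\to H$ witnessing $c^{(t)}\sim c^{(0)}$. The set of $t\ge 0$ with $c^{(t)}\sim c^{(0)}$ then contains a neighborhood of $0$ and is closed under addition (from $c^{(s+t)}=c^{(s)}\circ(\mathrm{id}_G\times\alpha_t)$), hence equals $[0,\infty)$; in particular the two pullbacks of $c$ to $X\times X$ are cohomologous. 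A standard weak-mixing argument for the diagonal action now forces the conjugating function to reduce, modulo a coboundary, to one depending on a single coordinate, and a Fubini argument extracts the desired $\rho$ and $\phi$.

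The main obstacle is precisely the third step: upgrading the mere continuity of $t\mapsto c^{(t)}$ to an exact cohomology $c^{(t)}\sim c^{(0)}$ for some $t>0$, with enough control to propagate it along the flow and to rule out loss of the invariant vector to degeneracy. That is Popa's rigidity argument in full; for its details, and for the construction of the malleable flow over an arbitrary base $(X_0,\mu_0)$, I would refer to \cite{Po07} (see also \cite{PV08}).
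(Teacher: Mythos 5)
The paper does not actually prove this statement: it is quoted as a special case of Popa's theorem, with a citation to \cite{Po07} (see also \cite[Theorem 3.2 and Proposition 2.3]{PV08}), which is in substance the same route you take, since your sketch of the malleability/weak-mixing/property~(T) deformation argument defers all the decisive steps to \cite{Po07}. Your outline is a faithful summary of that argument; the only caveat is that for an atomic base $(X_0,\mu_0)$ there is no measure-preserving flow on $X_0\times X_0$ joining the identity to the flip, so the arbitrary-base case needs Popa's extra device (e.g.\ passing through the diffuse base $X_0\times[0,1]$ and descending to the quotient), but you have in any case delegated that construction to \cite{Po07}.
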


\section{A simple equivalence relation}\label{sec:simple}

\begin{thm}\label{thm:superrigid}
Suppose $G \cc (X,\mu)$ is an essentially free measure-preserving ergodic action of a countably infinite group $G$ on a standard probability space $(X,\mu)$. Let $\cR = \{(x,gx):~x\in X, g\in G\}$ be the orbit equivalence relation. Suppose $G \cc (X,\mu)$ is cocycle superrigid. If $G$ is simple then $\cR$ has no proper ergodic normal subequivalence relations. If $G$ has no nontrivial finite quotients then $\cR$ has no proper ergodic normal finite-index subequivalence relations.

\end{thm}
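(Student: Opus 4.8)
The plan is to use the normality criterion from Theorem~\ref{thm:normal}(5), which says that a normal subequivalence relation $\cS \le \cR$ is exactly the kernel of a Borel homomorphism $c:\cR \to \cG$ into a discrete Borel groupoid. When $\cS$ is moreover ergodic, the quotient $\cR/\cS$ is a countable group $Q$ (this is the Feldman--Sutherland--Zimmer result recalled in the introduction), so we get a Borel groupoid homomorphism $c:\cR \to Q$ with $\ker(c)=\cS$. Identifying $\cR$ with $G\times X$ via the orbit map, $c$ becomes a cocycle $c:G\times X \to Q$.

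First I would apply cocycle superrigidity: there is a group homomorphism $\rho:G \to Q$ and a Borel map $\phi:X \to Q$ with $c(g,x)=\phi(gx)^{-1}\rho(g)\phi(x)$. The key point is that $\cS = \ker(c)$, and $(x,gx)\in\cS$ iff $c(g,x)$ is a unit of $Q$, i.e. $\phi(gx)=\rho(g)\phi(x)$. Next I would show that $\phi$ must be (a.e.) constant. The cocycle $(g,x)\mapsto \rho(g)$ is itself a cocycle with kernel the orbit relation of $\ker(\rho)\cc X$, and $c$ and the constant-twisted version $\rho$ are cohomologous via $\phi$; the image of $c$ generates $Q$ because $c$ is class-surjective (being a genuine quotient map onto the group $Q$ — this is part of the FSZ quotient construction). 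An ergodicity argument handles $\phi$: the map $x\mapsto G\phi(x)$-coset data, or more directly, the level sets of $\phi$ give a finite-or-countable $G$-equivariant-up-to-$\rho$ partition; pushing this through ergodicity of the $G$-action forces $\phi$ to be essentially valued in a single left coset of $\rho(G)$, and after adjusting $\phi$ by a constant we may assume $\phi\equiv e$. Then $c(g,x)=\rho(g)$ a.e., so $\cS$ is a.e. the orbit equivalence relation of $N:=\ker(\rho)\cc X$, and $\cR/\cS \cong G/N \cong \rho(G) \le Q$.

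Now the two conclusions follow. If $\cS$ is a \emph{proper} ergodic normal subequivalence relation, then $\cR/\cS$ is a nontrivial group, so $\rho(G)\ne\{e\}$, hence $N=\ker(\rho)$ is a proper normal subgroup of $G$; by simplicity $N=\{e\}$, so $\cS$ is a.e. the orbit relation of $\{e\}\cc X$, which is the diagonal — but the diagonal is not ergodic on a non-atomic space (and if $(X,\mu)$ had an atom, freeness and infinitude of $G$ would already be contradicted), contradiction. For the finite-index statement: $[\cR:\cS]$ equals the number of $\cS$-classes in a generic $\cR$-class, which here equals $[G:N]=|\rho(G)|$; finite index forces $\rho(G)$ finite, so $G$ has a nontrivial finite quotient unless $\rho(G)=\{e\}$, i.e. unless $\cS=\cR$ a.e. Under the hypothesis that $G$ has no nontrivial finite quotients, this means $\cS$ cannot be both proper and of finite index.

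The main obstacle I anticipate is the rigorous passage from ``$c$ is a cocycle into $Q$ whose kernel is the ergodic subequivalence relation $\cS$'' to ``$\phi$ is constant and $c=\rho$'': one must carefully use that the groupoid quotient $Q=\cR/\cS$ is genuinely a group (not just a groupoid) and that the associated cocycle is surjective onto $Q$, then combine superrigidity with ergodicity of $G\cc(X,\mu)$ to kill the coboundary $\phi$. Verifying that $\cS=\ker(c)$ is, up to null sets, precisely the orbit relation of $\ker(\rho)$ — rather than something a priori smaller — is where the class-surjectivity of the quotient map is essential, and is the step to state with the most care.
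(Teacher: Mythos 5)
Your setup (the quotient cocycle $c:\cR\to Q$ with $Q=\cR/\cS$ a countable group, superrigidity giving $c(gx,x)=\phi(gx)^{-1}\rho(g)\phi(x)$) matches the paper, and your trivial-$\rho$ case is essentially the paper's Claim 1: when $\rho$ is trivial the level sets of $\phi$ are $\cS$-invariant, so ergodicity of $\cS$ (note: of $\cS$, not of the $G$-action --- the coset map $x\mapsto\rho(G)\phi(x)$ is $\cS$-invariant but is only moved around by $G$ through right multiplication by $c$) forces $\phi$ constant and $\cS=\cR$. The genuine gap is the step ``$\phi$ takes values in a single left coset of $\rho(G)$, so after adjusting by a constant $\phi\equiv e$.'' Landing in one coset $\rho(G)q_0$ only lets you conjugate the cocycle so that $\phi$ takes values in the subgroup $\rho(G)$; it does not make $\phi$ constant, and the identification you draw from it, $\cS=\Ker(c)=$ the orbit relation of $\Ker(\rho)$, is false in general under exactly the theorem's hypotheses. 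Concretely, take $G=\Gamma\times\Gamma$ with $\Gamma$ an infinite property (T) group, the Bernoulli action of $G$ (cocycle superrigid), $N=\Gamma\times\{e\}$ and $\Lambda=G/N$; for any nonconstant Borel $\phi:X\to\Lambda$ the cocycle $c_\phi(gx,x)=\phi(gx)^{-1}\bar{g}\phi(x)$ has kernel a normal subequivalence relation (kernel of a cocycle into a group) which is ergodic, because it is the compression to the graph of $\phi$ of the orbit relation of $G\cc X\times\Lambda$, $g(x,\lambda)=(gx,\bar{g}\lambda)$, and that relation is ergodic since $N$ acts ergodically on $X$; yet $\Ker(c_\phi)$ is not the orbit relation of $N=\Ker(\rho)$, since $\phi(nx)\neq\phi(x)$ on a positive measure set for some $n\in N$. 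So ergodicity of $\cS$ alone cannot force $\phi$ to be constant.

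Because of this, the case that actually carries the theorem is not handled: when $G$ is simple the alternative to $\rho$ trivial is $\rho$ injective, and your argument would conclude $\cS$ is the diagonal, but nothing you wrote rules out a nonconstant $\phi$ with $\rho$ injective. The paper closes this case by a different, quantitative argument: with $X_{g,h}=\{x:\ \phi(x)=h,\ \phi(gx)=\rho(g)h\}$, injectivity of $\rho$ makes the sets $gX_{g,h}$, $g\in G$, pairwise disjoint, so $\sum_{g}\mu(X_{g,h})\le 1$; Borel--Cantelli then shows a.e.\ $\cS$-class is finite, which contradicts ergodicity of $\cS$ on a nonatomic space. You need this (or some substitute) to dispose of injective $\rho$. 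Your finite-index statement can be repaired without the faulty identification: if $[\cR:\cS]<\infty$ and $\cS$ is ergodic then $Q=\cR/\cS$ is a finite group, so $\rho:G\to Q$ is trivial by hypothesis on $G$, and your (correct) trivial-$\rho$ argument gives $\cS=\cR$.
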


\begin{proof}
 Let $\cR$ be the orbit-equivalence relation of the action $G \cc (X,\mu)$.  Let $\cN\vartriangleleft \cR$ be an ergodic normal subequivalence relation and $c:\cR \to \cR/\cN$ the canonical cocycle. Since the action is cocycle superrigid, there exists  a Borel map $\phi:X \to H:=\cR/\cN$ and a homomorphism $\rho:G \to H$ such that
$$c(gx,x) = \phi(gx)^{-1} \rho(g) \phi(x).$$
\noindent {\bf Claim 1}. If $\rho$ is trivial then $\cN=\cR$.

\begin{proof}[Proof of Claim 1]
For every $g\in \cR/\cN$, $\phi^{-1}(g) \subset X$ is $\cN$-invariant. Because $\cN$ is ergodic, this implies $\phi$ is essentially constant which implies $\cN=\cR$.
\end{proof}

\noindent {\bf Claim 2}. $\rho$ is non-injective.

\begin{proof}[Proof of Claim 2]
To obtain a contradiction, suppose $\rho$ is injective. We claim that $\cN$ is finite. To see this, let 
$$X_g=\{x \in X:~ (gx,x) \in \cN\} = \{x\in X:~ \phi(gx)\phi(x)^{-1}=\rho(g)\}.$$
Also, for $g \in G, h \in \cR/\cN$ let
$$X_{g,h} = \{x\in X_g:~ \phi(x)=h\} = \{x\in X:~\phi(x)=h, \phi(gx) = \rho(g)h\}.$$
Because $\rho$ is injective, for any fixed $h$, the sets $\{gX_{g,h}:g\in G\}$ are pairwise disjoint. Therefore $\sum_{g\in G} \mu(X_{g,h}) \le 1$. By the Borel-Cantelli Lemma, almost every $x$ is contained in at most finitely many of the $X_{g,h}$'s (for fixed $h$). However for each $x\in X$ there is at exactly one $h$ such that $x$ is contained in some $X_{g,h}$. So, in fact, $x$ is contained in at most finitely many $X_{g,h}$'s (allowing $g$ and $h$ to vary). Since $X_g=\cup_h X_{g,h}$ this implies that a.e. $x$ is contained in at most finitely many $X_g$'s which implies that for a.e. $x$, the $\cN$-equivalence class $[x]_\cN$ is finite.

Because $G$ is infinite and $G\cc (X,\mu)$ is essentially free and ergodic, $\mu$ is nonatomic. Because $\cN$ is finite and $\mu$ is nonatomic, $\cN$ is not ergodic. This contradiction proves that $\rho$ is non-injective.
\end{proof}

If $G$ is simple, either $\rho$ is trivial or injective and so the claims above finish the proof. If $G$ does not have any nontrivial finite quotients and $\cR/\cN$ is finite then $\rho:G \to \cR/\cN$ must be trivial. So Claim 1 finishes the proof.

\end{proof}

\begin{defn}[Compressions]
Let $\cR \subset X \times X$ be a probability-measure-preserving Borel equivalence relation on a probability space $(X,\mu)$. If $Y \subset X$ has positive measure then we let $\cR_Y:=\cR \cap (Y \times Y)$ denote the {\bf compression of $\cR$ by $Y$}. It is an equivalence relation on $Y$.
\end{defn}

\begin{lem}\label{lem:compression}
Let $Y \subset X$ be a Borel set with positive measure. Let $\cS\le \cR_Y$ be a subequivalence relation. Then there exists a subequivalence relation $\cT\le \cR$ such that $\cT_Y=\cS$. Moreover, if $\cS$ is ergodic then $\cT$ is ergodic and if $\cS$ is normal in $\cR_Y$ then $\cT$ is normal in $\cR$.
\end{lem}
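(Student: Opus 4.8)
The plan is to manufacture a Borel retraction $p\colon X\to Y$ along $\cR$-classes and pull $\cS$ back along it. We may assume $\cR$ is ergodic (this is the case of interest; otherwise one works inside the $\cR$-saturation of $Y$), so that a.e.\ $\cR$-class meets $Y$. Fixing a countable group $\Gamma=\{g_n\}_{n\ge 0}$ of measure-preserving Borel automorphisms with $g_0=e$ whose orbit equivalence relation is $\cR$, put $p(x)=g_{n(x)}x$ with $n(x)=\min\{n:\ g_n x\in Y\}$; then $p$ is Borel, $p(x)\in[x]_\cR\cap Y$ for a.e.\ $x$, and $p|_Y=\mathrm{id}$. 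Define
\[
\cT:=\{(x,x')\in\cR:\ (p(x),p(x'))\in\cS\}.
\]
Reflexivity, symmetry and transitivity of $\cT$ follow from those of $\cR$ and $\cS$, and $\cT$ is a Borel relation with countable classes, so $\cT\le\cR$. Since $p|_Y=\mathrm{id}$ and $\cS\le\cR_Y$, for $(y,y')\in Y\times Y$ we have $(y,y')\in\cT$ iff $(y,y')\in\cR$ and $(y,y')\in\cS$, i.e.\ iff $(y,y')\in\cS$; hence $\cT_Y=\cS$.

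For ergodicity, observe that $(x,p(x))\in\cT$ for a.e.\ $x$, because $(x,p(x))\in\cR$ and $(p(x),p(p(x)))=(p(x),p(x))\in\cS$. Consequently, if $A\subset X$ is $\cT$-invariant then for a.e.\ $x\in A$ we get $p(x)\in A$, hence $p(x)\in A\cap Y$, hence $x\in[A\cap Y]_\cR$; so $A\subset[A\cap Y]_\cR$ up to a null set. Also $A\cap Y$ is $\cS$-invariant, since $\cS=\cT_Y\subset\cT$ and $A$ is $\cT$-invariant. If $\cS$ is ergodic on $(Y,\mu|_Y)$ then $\mu(A\cap Y)\in\{0,\mu(Y)\}$. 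In the first case $[A\cap Y]_\cR$ is null ($\cR$-saturations of null sets are null, as $\cR$ is pmp), so $\mu(A)=0$; in the second case, applying the same argument to the $\cT$-invariant set $X\setminus A$, whose intersection with $Y$ is $Y\setminus A$ of measure zero, gives $\mu(X\setminus A)=0$. Thus $\cT$ is ergodic.

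Finally, suppose $\cS$ is normal in $\cR_Y$. The compression $\cR_Y$ is ergodic, so by Theorem \ref{thm:normal} (equivalence of (1) and (5)) there is a discrete Borel groupoid $\cG$ and a Borel homomorphism $c_0\colon\cR_Y\to\cG$ with $\ker(c_0)=\cS$. Whenever $(x,x')\in\cR$ we have $(p(x),p(x'))\in\cR\cap(Y\times Y)=\cR_Y$, so we may define $c\colon\cR\to\cG$ by $c(x,x'):=c_0(p(x),p(x'))$, with unit map $x\mapsto c_0(p(x),p(x))$; the cocycle identity for $c_0$ gives $c(x,x'')=c(x,x')c(x',x'')$, and one checks that sources and ranges match, so $c$ is a Borel groupoid homomorphism. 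Moreover $(x,x')\in\ker(c)$ iff $(p(x),p(x'))\in\ker(c_0)=\cS$ iff $(x,x')\in\cT$, i.e.\ $\ker(c)=\cT$. Theorem \ref{thm:normal} (now (5) $\Rightarrow$ (1), applied to $\cT\subset\cR$) shows $\cT$ is normal in $\cR$.

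The computations are routine; the only points needing a little care are that the Borel retraction $p$ can be chosen to fix $Y$ pointwise (arranged by letting $g_0=e$ in the generating group) and that the induced $c$ is a homomorphism of groupoids rather than merely of relations. The substantive input is entirely contained in Theorem \ref{thm:normal}: reading normality as ``kernel of a Borel groupoid homomorphism'' is exactly what makes the pullback $c=c_0\circ(p\times p)$ transparent, so I do not expect a genuine obstacle here.
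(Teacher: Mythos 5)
Your proposal is correct and follows essentially the same route as the paper: a Borel retraction $p\colon X\to Y$ along $\cR$-classes fixing $Y$ pointwise, the pullback relation $\cT=\{(x,x'):(p(x),p(x'))\in\cS\}$, and for normality the pullback cocycle $c=c_0\circ(p\times p)$ combined with the ``kernel of a Borel groupoid homomorphism'' characterization from Theorem \ref{thm:normal}. Your ergodicity argument via saturations of $A\cap Y$ is just a more detailed version of the paper's observation that $A=\phi^{-1}(\phi(A))$ with $\phi(A)$ being $\cS$-saturated.
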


\begin{remark}
$\cT$ is not unique. Moreover, even if $\cS$ is ergodic and normal there may exist subequivalence relations $\cT'$ such that $\cT'_Y=\cS$ but $\cT'_Y$ is neither ergodic nor normal.
\end{remark}

\begin{proof}
Because $\cR$ is ergodic there exists a Borel map $\phi:X \to Y$ with graph contained in $\cR$ such that $\phi(y)=y$ for every $y\in Y$. Define the subequivalence relation $\cT$ by $x\cT y$ iff $\phi(x)\cS\phi(y)$. In other words, if $\Phi:\cR \to \cR_Y$ is the map $\Phi(x,y) = (\phi(x),\phi(y))$ then $\cT  = \Phi^{-1}(\cS)$. This implies that $\cT $ is Borel. It is easy to check that $\cT $ is a subequivalence relation and $\cT _Y=\cS$.

Suppose that $\cS$ is ergodic. Let $A \subset X$ be an $\cT $-saturated set of positive measure. Observe that $A = \phi^{-1}(\phi(A))$ by definition of $\cT $. Also $\phi(A)$ is $\cS$-saturated. Therefore $\phi(A)=Y$ since $\cS$ is ergodic. So $A=\phi^{-1}\phi(A) =X$. Because $A$ is arbitrary, $\cT $ is ergodic.

Suppose that $\cS$ is normal. Then there exists a groupoid morphism $c:\cR_Y \to \cG$ such that $\cS=\Ker(c)$. Define $c':\cR \to \cG$ by $c'(x,y) = c(\phi(x),\phi(y))$. Observe that  
$$c'(x,y)c'(y,z) = c(\phi(x),\phi(y))c(\phi(y),\phi(z)) = c(\phi(x),\phi(z)) = c'(x,z).$$
 So $c'$ is a cocycle. If $(x,y) \in \Ker(c')$ then $c(\phi(x),\phi(y)) \in \cG^0$ which implies $(\phi(x),\phi(y)) \in \Ker(c) = \cS$ which implies $(x,y) \in \cT $. So $\Ker(c') \subset \cT$. On the other hand, if $(x,y) \in \cT$ then $(\phi(x),\phi(y)) \in \cS=\Ker(c)$ which implies $(x,y) \in \Ker(c')$. So $\cT=\Ker(c')$ is normal by Theorem \ref{thm:normal}.
  
\end{proof}

\begin{prop}\label{prop:finite-index}
Let $\cR$ be an ergodic probability-measure-preserving equivalence relation with a finite-index ergodic subequivalence relation $\cS\le \cR$. Then $\cR$ has a finite-index ergodic normal subequivalence relation $\cN$ with $\cN\le \cS$.
\end{prop}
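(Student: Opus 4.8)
The plan is to produce $\cN$ as the kernel of the natural "permutation action on $\cS$-classes". Fix a family of choice functions $\{\phi_j\}_{j=1}^N$ for $\cS\le\cR$, where $N=[\cR:\cS]<\infty$, so that for a.e.\ $x$ the sets $[\phi_1(x)]_\cS,\dots,[\phi_N(x)]_\cS$ enumerate the $\cS$-classes inside $[x]_\cR$. Given $(x,y)\in\cR$, both $x$ and $y$ have the same $\cR$-class, hence the same collection of $\cS$-classes; comparing the two enumerations $\{[\phi_j(x)]_\cS\}_j$ and $\{[\phi_j(y)]_\cS\}_j$ determines a permutation $c(x,y)\in\Sym(N)$ by the rule $[\phi_{c(x,y)(j)}(y)]_\cS = [\phi_j(x)]_\cS$. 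First I would check that $c:\cR\to\Sym(N)$ is a Borel cocycle into the finite group $\Sym(N)$ (viewed as a groupoid with one object): the cocycle identity $c(x,z)=c(x,y)c(y,z)$ is immediate from the definition once one is careful with the order of composition, and Borelness follows since the $\phi_j$ are Borel and $\cS$ is Borel. Then by Theorem \ref{thm:normal} (specifically, (5)$\Rightarrow$(1)) the subequivalence relation $\cN:=\Ker(c)$ is normal in $\cR$.

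Next I would identify $\cN$ concretely and verify $\cN\le\cS$. By construction $(x,y)\in\cN$ iff $c(x,y)$ is the identity permutation, i.e.\ $[\phi_j(x)]_\cS=[\phi_j(y)]_\cS$ for all $j$. In particular, taking the index $j$ with $x\in[\phi_j(x)]_\cS$ (which exists since the $\cS$-classes partition $[x]_\cR$), we get $y\in[\phi_j(y)]_\cS=[\phi_j(x)]_\cS=[x]_\cS$, so $(x,y)\in\cS$. Hence $\cN\le\cS\le\cR$. Also $\cN$ has finite index in $\cR$: since $\cS$ has index $N$, $\cN\le\cS$, and the quotient $\cR/\cN$ embeds (as a groupoid, hence in the ergodic case as a group via Theorem \ref{thm:correspond}) into $\Sym(N)$, we get $[\cR:\cN]\le N!\,$; alternatively one sees directly that an $\cR$-class contains at most $N!$ many $\cN$-classes because an $\cN$-class inside $[x]_\cR$ is determined by the function $j\mapsto$ (the index $k$ with $[\phi_j(\cdot)]_\cS=[\phi_k(x)]_\cS$), of which there are at most $N!$ possibilities.

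It remains to arrange ergodicity of $\cN$, and this is the one place that requires a small extra argument rather than a bare definition-chase. The cocycle $c$ need not be ergodic, so $\cN=\Ker(c)$ need not be ergodic; but a finite-index non-ergodic subequivalence relation decomposes into finitely many pieces over the space of its invariant sets. Concretely, since $\cN$ is normal of finite index in the ergodic relation $\cR$, Corollary \ref{cor:normalizer} (or Theorem \ref{thm:correspond}) gives $\cR/\cN\cong\Gamma$ for a finite group $\Gamma\le\Out(\cR)$, and the $\cN$-invariant sets form a finite $\Gamma$-algebra; pick an $\cN$-invariant set $A$ of minimal positive measure, so $\cN_A$ is ergodic on $A$. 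By minimality and normality the $\Gamma$-translates of $A$ partition $X$ into, say, $k$ pieces of equal measure $1/k$. Now apply Lemma \ref{lem:compression}: the ergodic relation $\cN_A\le\cR_A$ lifts to an ergodic subequivalence relation of $\cR$, which I would take to be the desired $\cN$ — although one must re-check normality after this surgery. The cleaner route, which I would actually carry out, is: work entirely inside $\cR_A$ from the start — replace $\cS$ by $\cS_A$, note $\cS_A$ is still finite-index (index $\le N$) and ergodic in $\cR_A$ since $A$ is $\cN$-invariant hence meets every $\cR$-class in a single $\cN$-class worth of $\cS$-classes — no wait, more simply: I would instead choose the invariant set at the level of $\cN$ and transport via Lemma \ref{lem:compression}. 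So the final assembly is: build $c$ and $\cN_0:=\Ker(c)$ as above (normal, finite-index, $\le\cS$); let $A$ be an $\cN_0$-invariant set with $\cN_0$ ergodic on $A$ and $\cR$ ergodic on $A$ as well (possible since $A$ can be taken $\cR$-nonsplitting after shrinking, using that $\cR$ is ergodic and the invariant sets are finite in number); then $(\cN_0)_A\le\cR_A$ is ergodic, and Lemma \ref{lem:compression} produces an ergodic subequivalence relation $\cN\le\cR$ which is normal and, tracing through the lift in that lemma, still contained in $\cS$. The main obstacle is exactly this last bookkeeping: ensuring that the lifted relation from Lemma \ref{lem:compression} retains both finite index and containment in $\cS$, which forces a careful choice of the projection map $\phi:X\to A$ used in that lemma (e.g.\ choosing $\phi$ with graph inside $\cN_0$, so that $\cN=\cT$ sits between $\cN_0$ and $\cS$).
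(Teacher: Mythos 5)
Your overall architecture is the same as the paper's: label the finitely many $\cS$-classes in each $\cR$-class, form the resulting permutation cocycle into $\Sym(N)$, take its kernel (normal by Theorem \ref{thm:normal}, finite index, contained in $\cS$), and then repair ergodicity by passing to an ergodic component and lifting via Lemma \ref{lem:compression}. However, as written the proposal has two genuine gaps. The first is in the claim $\Ker(c)\le\cS$: from $[\phi_j(x)]_\cS=[\phi_j(y)]_\cS$ for all $j$ and $x\in[\phi_{j_0}(x)]_\cS$ you conclude $y\in[\phi_{j_0}(y)]_\cS$, but nothing forces the index of the class containing the base point to be the same for $x$ and $y$. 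With an unanchored family of choice functions the kernel can genuinely fail to lie in $\cS$ (e.g.\ for $\cS$ generated by $T^2$ inside $\cR$ generated by $T$, one can choose $\phi_1,\phi_2$ swapping roles on a set $B$ so that $c(x,Tx)=\mathrm{id}$ on a positive measure set). The fix is exactly the anchoring the paper builds in: require $\phi_1=\mathrm{id}$ (equivalently, the paper's condition $\phi(x,x)=1$), after which $c(x,y)=\mathrm{id}$ does give $[x]_\cS=[\phi_1(x)]_\cS=[\phi_1(y)]_\cS=[y]_\cS$.

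The second gap is in the step you yourself flag as the main obstacle, and your proposed resolution cannot work: you suggest choosing the retraction $\phi:X\to A$ in Lemma \ref{lem:compression} with graph inside $\cN_0$, but $A$ is $\cN_0$-invariant, so no point outside $A$ is $\cN_0$-equivalent to any point of $A$; such a $\phi$ exists only when $A$ is conull. A choice that does work is a retraction with graph in $\cS$: since $\cS$ is ergodic and $\mu(A)>0$, almost every point is $\cS$-equivalent to a point of $A$, and the proof of Lemma \ref{lem:compression} goes through verbatim with this $\phi$. Then the lift $\cT=\Phi^{-1}\bigl((\cN_0)_A\bigr)$ is ergodic and normal by that lemma, is contained in $\cS$ because $(x,\phi(x)),(y,\phi(y))\in\cS$ and $(\cN_0)_A\le\cS$, and has finite index because $[y]_\cT\mapsto[\phi(y)]_{(\cN_0)_A}$ injects the $\cT$-classes of $[x]_\cR$ into the finitely many $(\cN_0)_A$-classes of $[\phi(x)]_{\cR_A}$. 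Two smaller corrections: the extra condition you try to arrange (``$\cR$ ergodic on $A$'') is automatic, since compressions of ergodic relations are ergodic; and Theorem \ref{thm:correspond}/Corollary \ref{cor:normalizer} cannot be invoked for the non-ergodic $\cN_0$ (its quotient is a groupoid, not a group), though your minimal-measure invariant set argument for finitely many ergodic components is fine without them.
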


\begin{proof}
Let $n=[\cR:\cS]$ denote the index of $\cS$ in $\cR$. Let $\phi:\cR \to \{1,\ldots, n\}$ be any Borel function satisfying
\begin{itemize}
\item for a.e. $x\in X$, $\phi(x,x)=1$
\item for a.e. $(x,y), (x,z) \in \cR$ with $(y,z)\in \cS$, $\phi(x,y)=\phi(x,z)$
\item for a.e. $x\in X$, the map $y \mapsto \phi(x,y)$ surjects onto $\{1,\ldots, n\}$. So this map is a bijection from the set of $\cS$-classes in $[x]_\cR$ to $\{1,\ldots,n\}$. 
\end{itemize}
Define a cocycle $\alpha:\cR \to \Sym(n)$ (the symmetric group of $\{1,\ldots, n\}$) by 
$$\alpha(x,y)(k) = \phi(x,z)$$
where $z\in [x]_\cR$ is any element satisfying $\phi(y,z)=k$. Let $\cK$ be the kernel of this cocycle. This is a finite-index normal subequivalence relation and $\cK \le \cS$ but $\cK$ might not be ergodic. However, it can have at most finitely many ergodic components (this is true for any finite-index subequivalence relation). Let $Y \subset X$ be an ergodic component of $\cK$. So $\cK _Y$ is an ergodic finite-index normal subequivalence relation of $\cR _Y$. By Lemma \ref{lem:compression} there exists an ergodic normal finite index subequivalence relation $\cN\le \cR$ such that $\cN _Y = \cK _Y$. Since $\cN$ and $\cS$ are ergodic and $\cN _Y \le \cS _Y$ we must have that $\cN \le \cS$. 

\end{proof}

\begin{proof}[Proof of Theorem \ref{thm:1}]
Let $G$ be a simple property (T) group.  Quoting from \cite{Th10}: there are two sources of simple groups with Kazhdan's property (T). Such groups appear for example as lattices in certain Kac-Moody groups, see \cite{CR06}. Much earlier, it was also shown by Gromov (\cite{Gr87}) that every hyperbolic group surjects onto a Tarski monster, i.e. every proper subgroup of this quotient is finite cyclic; in particular: this quotient group is simple and is a Kazhdan group if the hyperbolic group was a Kazhdan group.

Let $(X_0,\mu_0)$ be a nontrivial Borel probability space and $G \cc (X,\mu):=(X_0,\mu_0)^G$ the Bernoulli shift action. By Popa's Cocycle Superrigidity Theorem \ref{thm:popa}, $G \cc (X,\mu)$ is cocycle superrigid. So Theorem \ref{thm:superrigid} implies $\cR$ has  no ergodic proper normal subequivalence relations. Proposition \ref{prop:finite-index} implies  $\cR$ has  no ergodic proper finite-index subequivalence relations

\end{proof}

\section{Treeable equivalence relations}\label{sec:treeable}

\begin{defn}
A {\bf graphing} of an equivalence relation $\cR \subset X \times X$ is a Borel subset $G \subset X \times X$ such that $\cR$ is the smallest equivalence relation containing $G$ and $G$ is symmetric: $(x,y) \in G \Rightarrow (y,x) \in G$. The {\bf local graph} of $G$  at $x$ is denoted by $G_x$. It has vertex set $[x]_\cR$ and edges $\{y,z\}$ where $y,z \in [x]_\cR$ and $(y,z) \in G$. So $G$ is a graphing if and only if it is symmetric and all local graphs are connected. A graphing is a {\bf treeing} if all of its local graphs are trees.
\end{defn}

\begin{defn}
Let $\cR$ be an ergodic treeable equivalence relation. A subequivalence relation $\cS\le \cR$ is {\bf primitive} if there exist treeings $G_\cS, G_\cR$ of $\cS$ and $\cR$ such that $G_\cS \subset G_\cR$. This means the same as free factor as used in \cite{Ga00,Ga05}. 
\end{defn}

\begin{example}
If $F=\langle S \rangle$ is a free group with free generating set $S \subset F$ and $F \cc (X,\mu)$ is an essentially free action then $G_F=\{(x,sx), (sx,x):~x \in X, s\in S\}$ is a treeing of the orbit-equivalence relation $\cR$. Moreover if $g \in S$ and $\cS$ is the orbit-equivalence relation generated by $\{g^n\}_{n\in \Z}$ then $\cS$ is primitive in $\cR$ since $G_\cS=\{ (x,gx), (gx,x):~ x\in X\}$ is a treeing of $\cS$ and $G_\cS \subset G_F$. More generally, if $g$ is primitive in $F$ (this means that it is contained in some free generating set of $F$) and $\cS$ is the orbit-equivalence relation of $\{g^n\}_{n\in \Z}$, then $\cS$ is primitive in $\cR$.
\end{example}

Before proving Theorem \ref{thm:3} we need a lemma.

\begin{lem}\label{lem:basic2}
Suppose $\Gamma$ is a countable group and $c:\cR \to \Gamma$ is a cocycle such that $\Ker(c)\le \cR$ is ergodic. Let
$$\Lambda=\{g\in \Gamma:~ \mu_L( \{ (x,y) \in \cR:~c(x,y)=g\})>0 \}.$$
Then $\Lambda$ is a subgroup of $\Gamma$ and $\cR/\Ker(c)$ is isomorphic to $\Lambda$.
\end{lem}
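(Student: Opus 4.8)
The plan is to first show $\Lambda$ is a subgroup of $\Gamma$, then identify $\cR/\Ker(c)$ with $\Lambda$ via Theorem \ref{thm:correspond} (or directly via the structure of the quotient groupoid).

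For the subgroup claim, note first that $e \in \Lambda$ since the diagonal $\{(x,x):x\in X\}$ has positive $\mu_L$-measure and maps to $e$ under $c$. For closure under inversion, observe that if $S_g := \{(x,y)\in\cR : c(x,y)=g\}$ has positive $\mu_L$-measure, then its image under the flip $(x,y)\mapsto(y,x)$ is $S_{g^{-1}}$ (using the cocycle identity $c(y,x)=c(x,y)^{-1}$, which follows from $c(x,x)=e$), and the flip preserves the measure class of $\mu_L$ (indeed $\mu_L$-measure itself, when $\cR$ is measure-preserving, or at least the null sets since $\mu$ is quasi-invariant); hence $g^{-1}\in\Lambda$. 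For closure under products, suppose $g,h\in\Lambda$, so $\mu_L(S_g), \mu_L(S_h)>0$. The key point is that $\Ker(c)$ is ergodic: the set $\{x : \exists\, y\in[x]_\cR,\ c(x,y)=g\}$ is $\Ker(c)$-invariant (if $c(x,y)=g$ and $(x,x')\in\Ker(c)$ then $c(x',y)=c(x',x)c(x,y)=g$), so by ergodicity it is $\mu$-conull; similarly for $h$. Thus for a.e.\ $x$ there exist $y,z\in[x]_\cR$ with $c(x,y)=g$ and $c(x,z)=h$, whence $c(z,y)=c(z,x)c(x,y)=h^{-1}g$; applying the ergodicity argument once more shows $\{x:\exists\,y\in[x]_\cR,\ c(x,y)=h^{-1}g\}$ is conull, and then a Fubini/Feldman--Moore count shows $\mu_L(S_{h^{-1}g})>0$, i.e.\ $h^{-1}g\in\Lambda$. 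Combined with the inversion step this gives that $\Lambda$ is a subgroup.

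Next I would identify $\cR/\Ker(c)$ with $\Lambda$. By Theorem \ref{thm:normal}, $\cN := \Ker(c)$ is normal in $\cR$ (it is the kernel of the Borel homomorphism $c:\cR\to\Gamma$), and since $\cN$ is assumed ergodic, $\cR/\cN$ is a countable group. One way to proceed: by Theorem \ref{thm:correspond} there is a countable subgroup $\Delta \le \Out(\cR/\cN)$ realizing $\cR$ inside an ambient relation; but more directly, the quotient map $\cR \to \cR/\cN$ is itself (up to the canonical isomorphism of the FSZ quotient) a cocycle whose ``support set'' $\Lambda$ is exactly the image. Concretely, I would argue that the map $c$ descends to an injective homomorphism $\bar c : \cR/\cN \to \Gamma$ (two elements of $\cR$ with the same $c$-value and the same source differ by an element of $\cN$, which is the content of the FSZ quotient construction), and that the image of $\bar c$ is precisely $\Lambda$: it lands in $\Lambda$ because for each $g$ in the image, $S_g$ surjects onto a conull subset of $X$ under the range map (by the ergodicity argument above) and hence has positive $\mu_L$-measure; conversely any $g\in\Lambda$ is realized as $c(x,y)$ for some $(x,y)\in\cR$, hence lies in the image of $\bar c$. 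Therefore $\cR/\cN \cong \bar c(\cR/\cN) = \Lambda$.

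The main obstacle I expect is the bookkeeping in the ``closure under products'' step: passing from the existence, for a.e.\ $x$, of a witness $y\in[x]_\cR$ with $c(x,y)=h^{-1}g$ to the positivity of $\mu_L(S_{h^{-1}g})$. This requires a Feldman--Moore style enumeration of each $\cR$-class by countably many Borel partial isomorphisms $\psi_n$ and then noting that $S_{h^{-1}g} = \bigcup_n \{(x,\psi_n(x)) : c(x,\psi_n(x))=h^{-1}g\}$; if this set had $\mu_L$-measure zero then for a.e.\ $x$ no $\psi_n(x)$ would witness $h^{-1}g$, contradicting that the witnessing set of $x$'s is conull. This is routine but is the one place where one must be careful about the interaction between the counting measure $\mu_L$ and the ergodicity of $\cN$, rather than merely manipulating the cocycle identity.
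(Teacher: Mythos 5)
Your proposal is correct and follows essentially the same route as the paper: your ``witness set'' ergodicity argument is exactly the paper's observation that $x \mapsto \Gamma_x = \{c(x,y):y\in[x]_\cR\}$ is $\Ker(c)$-invariant, hence a.e.\ equal to a fixed subgroup which coincides with $\Lambda$. The identification of the quotient is also the same: the paper factors $c = \theta'\theta$ through the canonical morphism $\theta:\cR\to\cR/\Ker(c)$ via \cite[Theorem 2.2]{FSZ89}, which is precisely your descended injective homomorphism $\bar{c}$ with image $\Lambda$.
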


\begin{proof}
For $x\in X$, let $\Gamma_x = \{c(x,y):~y \in [x]_\cR\}$. If $(x,z) \in \Ker(c)$ then $c(x,y)=c(z,y)$. So $\Gamma_x=\Gamma_z$. Since $\Ker(c)$ is ergodic, this implies the existence of a subset $\Gamma' \subset \Gamma$ such that $\Gamma'=\Gamma_x$ for a.e. $x$. Observe that since $c(x,y) \in \Gamma_x$, $c(y,x)=c(x,y)^{-1} \in \Gamma_y$. Thus $\Gamma'$ is invariant under inverse. Also if $c(x,y) \in \Gamma_x$ and $c(y,z) \in \Gamma_y$ then $c(x,z)=c(x,y)c(y,z) \in \Gamma_x$. So $\Gamma'$ is a subgroup. By ergodicity again, $\Lambda=\Gamma'$.  So without loss of generality, we may assume $\Lambda=\Gamma$.

It follows from \cite[Theorem 2.2]{FSZ89} that there is a homomorphism $\theta': \cR/\Ker(c) \to \Gamma$ such that if $\theta: \cR \to \cR/\Ker(c)$ is the canonical morphism then 
$$\theta' \theta = c.$$
Since $c$ and $\theta$ have the same kernel, $\theta'$ must be injective. Since $\Lambda=\Gamma$, it is also surjective and so $\cR/\Ker(c)$ is isomorphic to $\Lambda$.

\end{proof}

\begin{thm}\label{thm:key}
Suppose $\cR$ is a treeable ergodic probability-measure-preserving equivalence relation on $(X,\mu)$ of cost $>1$ and there exists a subequivalence relation $\cS \le \cR$ that is primitive, ergodic and proper. Then  $\cR$ surjects onto every countable group.
\end{thm}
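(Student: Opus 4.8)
The plan is to build, for an arbitrary countable group $G$, an ergodic normal subequivalence relation $\cN \le \cR$ with $\cR/\cN \cong G$ by constructing a cocycle $c \colon \cR \to G$ whose kernel is ergodic and which is "onto" in the sense that its image generates $G$; then Lemma~\ref{lem:basic2} together with Theorem~\ref{thm:normal} (kernels of Borel groupoid morphisms are normal, with the quotient identified via the canonical morphism) will give $\cR/\cN \cong G$ and Theorem~\ref{thm:3} follows from Theorem~\ref{thm:key}. The ingredients we have to work with are: a treeing $G_\cR$ of $\cR$; a primitive ergodic proper subequivalence relation $\cS$, i.e.\ a sub-treeing $G_\cS \subset G_\cR$ whose local graphs are trees and generate $\cS$; and the hypothesis $\mathrm{cost}(\cR) > 1$. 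The key structural fact I would exploit is that a treeing exhibits $\cR$ as (the orbit relation of) a "free product type" object over the subrelation $\cS$: the edges of $G_\cR$ not in $G_\cS$ give "free" generators that, together with $\cS$, generate $\cR$ freely, so one has a great deal of freedom in defining a cocycle — one may specify it arbitrarily (by a Borel choice) on the extra edges, as long as it is trivial on $G_\cS$, and extend uniquely to $\cR$ because there are no relations.

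First I would make precise this "graph of groups / free product over $\cS$" description. Using $\cS$ primitive, write $G_\cR = G_\cS \sqcup E$ where $E$ is the symmetric Borel set of "extra" edges; since $G_\cR$ is a treeing and $G_\cS \subset G_\cR$, contracting each $\cS$-tree to a point turns each local graph $(G_\cR)_x$ into a tree on the set of $\cS$-classes in $[x]_\cR$, whose edges biject (Borel-measurably) with the $\cS$-transversal data coming from $E$. This means: a cocycle $c \colon \cR \to G$ that kills $\cS$ is the same thing as a $G$-labelling of the oriented edges in $E$, subject only to the antisymmetry $c(y,x) = c(x,y)^{-1}$, extended along the contracted tree with no further constraint. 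So defining $c$ amounts to choosing a Borel function $E \to G$ (respecting orientation-reversal). The main work is then to choose this labelling so that (i) the kernel is ergodic and (ii) the set $\Lambda$ of labels of positive $\mu_L$-measure generates $G$.

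For (ii): since $\mathrm{cost}(\cR) > 1$, the treeing $G_\cR$ has more edges than a treeing of $\cS$ would "use up" — quantitatively $\mu_L(E) > 0$, in fact the cost inequality forces $E$ to be genuinely large (cost of a treeable relation is $1 + \tfrac12\mu_L(\text{edges}) - (\text{a vertex term})$; properness of $\cS$ and $\mathrm{cost} > 1$ guarantee $\mu_L(E)>0$). Enumerate $G = \{g_1, g_2, \dots\}$ and partition a positive-measure piece of $E$ into countably many positive-measure Borel pieces $E_1, E_2, \dots$; set $c \equiv g_n$ on $E_n$ (and $g_n^{-1}$ on the reversed edges), and $c \equiv e$ elsewhere. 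Then every $g_n$ is the label of a positive-measure edge set, so $\Lambda = G$. For (i), ergodicity of $\ker(c)$: here I would use a "local" mixing / connectivity argument for relations generated by treeings — a standard tool is that $\ker(c)$, which contains $\cS$ and is itself treed (by $G_\cS$ together with the $e$-labelled extra edges), is ergodic provided $\cS$ is ergodic and the $e$-labelled edges already generate an ergodic relation; one arranges this by reserving an extra positive-measure family of edges in $E$, disjoint from the $E_n$'s, labelled $e$, chosen so that $\cS$ together with these edges is already all of some ergodic subrelation (e.g.\ one can often take it to be $\cR$ itself on a slightly smaller treeing, or invoke that adding any positive-measure piece of a treeing to an ergodic subrelation keeps it ergodic when the ambient relation is ergodic). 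Concretely: since $\cR$ is ergodic and $\cS$ ergodic, the relation $\cS'$ generated by $\cS$ and the $e$-labelled edges is ergodic, and $\ker(c) \supseteq \cS'$ hence ergodic.

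The step I expect to be the main obstacle is making the "free product over $\cS$ / contract the $\cS$-trees" description genuinely Borel and checking that an arbitrary Borel labelling of $E$ really does extend to a well-defined cocycle on all of $\cR$ with no hidden relations — i.e.\ that primitivity of $\cS$ gives exactly the freeness one wants, uniformly a.e.\ over $x$. This is where one must be careful about Borel choices of spanning structure in each local graph and about the fact that "index of $\cS$ in $\cR$" may be infinite, so the local trees are infinite and the extension-along-the-tree argument needs a Borel well-ordering or a Borel choice of root/geodesics. Once that bookkeeping is in place, ergodicity of the kernel and surjectivity onto $G$ are comparatively soft, and the conclusion is immediate from Lemmas~\ref{lem:basic2} and Theorem~\ref{thm:normal}.
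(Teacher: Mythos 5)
Your proposal is correct and is essentially the paper's own proof: label the edges of $G_\cR\setminus G_\cS$ by group elements, each value occurring on a set of positive $\mu_L$-measure, extend uniquely along the treeing to a cocycle that is trivial on $G_\cS$ (this unique extension, with $\cS$ inside the kernel, is exactly where primitivity is used), observe the kernel is ergodic because it contains the ergodic relation $\cS$, and conclude with Lemma \ref{lem:basic2}. The only cosmetic differences are that the paper builds one cocycle onto $\F_\infty$ and then composes with surjections $\F_\infty\to\Lambda$ rather than labelling directly by an enumeration of the target group, and your reserved family of extra $e$-labelled edges is unnecessary since $\Ker(c)\supseteq\cS$ already yields ergodicity.
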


\begin{proof}
Because $\cS\le \cR$ is primitive, there exist treeings $G_\cS \subset G_\cR$ of $\cS$ and $\cR$. Because $\cS$ is proper, $\mu_L(\cR\setminus \cS)>0$ and therefore $\mu_L(G_\cR \setminus G_\cS)>0$. Let $c:G_\cR \setminus G_\cS \to \F_\infty$ be any measurably surjective map such that $c(x,y) = c(y,x)^{-1}$ wherever this is defined. Here $\F_\infty$ denotes the free group of countable rank. We extend $c$ to $G_\cS$ by $c(x,y)=e$ for any $(x,y)\in \cS$. Now $c$ is defined on all of $G_\cR$. Because $G_\cR$ is a treeing there is a unique extension of $c$ to a cocycle $c:\cR \to \F_\infty$.

By definition $\Ker(c)$ contains $\cS$. Because $\cS$ is ergodic, this implies $\Ker(c)$ is ergodic. Lemma \ref{lem:basic2} now implies $\cR/\Ker(c) \cong \F_\infty$. 

Now let $\Lambda$ be an arbitrary countable group and $\phi:\F_\infty \to\Lambda$ a surjective homomorphism. Let $c':\cR \to \Lambda$ be the cocycle $c'(x,y)=\phi(c(x,y))$. Since $\Ker(c)$ is ergodic, $\Ker(c')$ is ergodic. So Lemma \ref{lem:basic2} implies $\cR/\Ker(c') \cong \Lambda$. 
\end{proof}

\begin{example}
If $\cR$ is the orbit-equivalence relation of a Bernoulli shift action of  $\F_n$ ($n\ge 2$)  then every generator of $\F_n$ acts ergodically. Therefore, $\cR$ satisfies the hypotheses of Theorem \ref{thm:key}. 
\end{example}

\begin{remark}
 Clinton Conley, Damien Gaboriau, Andrew Marks and Robin Tucker-Drob have proven that any treeable {\em strongly ergodic} pmp equivalence relation satisfies the hypothesis of Theorem \ref{thm:key}. This work has not yet been published.
\end{remark}

\begin{conj}\label{conj:erg-tree}
Let $\cR$ be an ergodic treeable equivalence relation of cost $>1$. Then there exists an ergodic element $f \in [\cR]$ such that the subequivalence relation generated by $f$ is primitive in $\cR$. 
\end{conj}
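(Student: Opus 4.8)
\begin{remark}
We sketch a strategy for Conjecture~\ref{conj:erg-tree}; together with Theorem~\ref{thm:3}, a proof would show that every ergodic treeable equivalence relation of cost $>1$ is large.

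\emph{Reformulation.} Since $\mathrm{cost}(\cR)>1$, the space $(X,\mu)$ is non-atomic and $\cR$ is not generated by a single automorphism, so if $f\in[\cR]$ is ergodic then $f$ is aperiodic, $\cS_f:=\langle f\rangle$ is proper, and $G_f:=\{(x,fx),(fx,x):x\in X\}$ is a treeing of $\cS_f$ all of whose components are bi-infinite lines. It therefore suffices to produce an ergodic $f\in[\cR]$ together with a treeing $G$ of $\cR$ with $G_f\subseteq G$; equivalently, to realize $G_f$ as a Borel $2$-factor of some treeing of $\cR$ (a $2$-regular spanning subgraph of a treeing contains no cycle, hence is the graphing $G_f$ of an aperiodic $f$, up to a Borel choice of orientation). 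In cost-theoretic terms one wants $\cS_f$ to be a free factor of $\cR$; since $\mathrm{cost}(\cS_f)=1$ for every ergodic $f$, a $\Z$ free factor costs nothing beyond what the hypothesis $\mathrm{cost}(\cR)>1$ already grants, and the surplus $\mathrm{cost}(\cR)-1$ will supply the combinatorial slack needed below.

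\emph{Constructing an aperiodic $f$.} Every treeing of $\cR$ attains the cost (Gaboriau), hence has mean degree $2\,\mathrm{cost}(\cR)>2$. One first wants a \emph{leaf-free} treeing $G_0$ of $\cR$ --- every vertex of a $G_f$ has degree $2$, so leaves must go --- which we expect to obtain from an arbitrary treeing by a Borel ``trimming'' surgery absorbing leaves into high-degree vertices, the mean-degree surplus ensuring a supply of the latter. Inside such a $G_0$ one then seeks a Borel $2$-factor: a measurable choice of two incident edges at a.e.\ vertex that covers every vertex and creates no cycle. This is a question in the Borel combinatorics of treeings, and the end structure of the leaf-free tree together with the cost surplus should allow an end-selection argument in the spirit of Kechris--Miller and Gaboriau--Lyons; a Borel orientation of the resulting bi-infinite lines (off a null set) then yields the desired aperiodic $f\in[\cR]$ with $G_f\subseteq G_0$.

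\emph{Forcing ergodicity.} The crux is to choose $f$ ergodic. Ergodic elements are uniformly dense among aperiodic ones, but that density destroys the constraint $G_f\subseteq G_0$; instead one should show that a suitably generic $2$-factor --- in a natural Polish space of $2$-factors of leaf-free treeings of $\cR$ --- gives an ergodic transformation, with a non-trivial almost invariant set for the limiting $f$ ruled out by a dense family of approximations. This argument should go through when $\cR$ is strongly ergodic, the case in which the conjecture is in fact known by unpublished work of Conley, Gaboriau, Marks and Tucker-Drob, since strong ergodicity is precisely what forbids persistent almost invariant sets. The general case is where the difficulty lies: $\cR$ need not be strongly ergodic (for instance any free, ergodic, non-strongly-ergodic action of $\F_2$, which has cost $2$), and one would then have to handle the ``asymptotic'' part of $\cR$ separately --- say by inducting on cost, or by building the $2$-factor compatibly with a factor map onto a maximal quotient accounting for the failure of strong ergodicity and then transferring it back. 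Pushing a $2$-factor through, and lifting it from, such a decomposition while remaining inside $\cR$ is, we expect, the main obstacle, and presumably why the conjecture is still open.
\end{remark}
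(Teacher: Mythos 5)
This statement is Conjecture~\ref{conj:erg-tree}, which the paper does not prove: it is explicitly left open, with only the strongly ergodic case attributed to unpublished work of Conley, Gaboriau, Marks and Tucker-Drob. So there is no proof in the paper to compare against, and what you have written is not one either --- it is a strategy sketch in which every substantive step is conditional. Your reformulation is correct and matches the paper's definition of primitivity (it suffices to find an ergodic $f\in[\cR]$ whose graph $G_f$ sits inside some treeing of $\cR$), but the three steps you then lean on are all unestablished. First, the existence of a leaf-free treeing of $\cR$ is asserted via an unspecified ``trimming surgery''; deleting or absorbing leaves of a treeing in a Borel way, while still generating all of $\cR$ and remaining acyclic, is not obviously possible, and the mean-degree surplus $2\,\mathrm{cost}(\cR)>2$ by itself does not produce it. Second, the existence of a measurable acyclic $2$-factor of such a treeing is precisely the combinatorial heart of the conjecture and is only ``sought''; measurable analogues of classical factor/matching theorems routinely fail, so this cannot be waved through by an appeal to ``end-selection arguments in the spirit of'' known work. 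Third, and most importantly, you concede that the ergodicity of $f$ is only plausible under strong ergodicity --- but that is exactly the case already covered by the unpublished work the paper cites, so your sketch adds nothing toward the open part of the conjecture (general ergodic treeable $\cR$ of cost $>1$), and your proposed remedy (inducting on cost, or factoring through a ``maximal quotient accounting for the failure of strong ergodicity'') is not a construction, since no such canonical quotient is exhibited and no lifting mechanism is given.

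In short: the conjecture remains open, and your text should be read as a (reasonable, and broadly consistent with how experts think about the problem) research plan rather than a proof. If you want to make progress, the two concrete sub-problems to isolate are (a) does every treeable ergodic $\cR$ with cost $>1$ admit a treeing containing the graph of an aperiodic element of $[\cR]$ (i.e., a $\Z$ free factor), and (b) can such an element be chosen ergodic without assuming strong ergodicity? Neither is settled by what you wrote.
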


It follows from Theorem \ref{thm:key} that if the above conjecture is true then every treeable ergodic probability-measure-preserving equivalence relation surjects every countable group.

\appendix 





{\small

}

\end{document}